\newcommand{\lrhup}[2]{%
  \ooalign{$\m@th#1\leftharpoonup$\cr$\m@th#1\rightharpoonup$\cr}%
}
\newtheorem{definition}{Definition}[section]
\newtheorem{lemma}[definition]{Lemma}
\newtheorem{remark}[definition]{Remark}
\def\eproof{\hfill $\Box$}
\def\R{{{\rm I}\!{\rm R}}}
\def\m{{\bf m}}
\def\u{{\bf u}}
\def\x{{\bf x}}
\def\C0{\mathcal{C}_0}
\def\Ca0{\mathcal{C}_{0,a}}
\begin{document}
\title{\bf Magneto-elasticity on the disk
%\\
%\hfill \\
%\red\boxed{\huge\bf ~ready ~~for~~ submission~}
}
\author{}

%\date{%May 29, 2019}%{\today}
 \maketitle

{\small
\begin{center}
 {\sc Sandra Carillo} \\
Dipartimento di Scienze di Base e Applicate \\
per l'Ingegneria, 
 Universit\`{a}   di Roma  {\textsc{La Sapienza}} ,\\
 Via Antonio Scarpa 16,  00161 Rome,  
Italy\\
\& \\
I.N.F.N. - Sezione Roma1,
Gr. IV - M.M.N.L.P.,  Rome, Italy\\
[10pt]
 {\sc Michel Chipot} \\
IMath,  University of Z\"urich\\
Winterthurerstrasse 190, 8057, Z\"urich,  Switzerland \\
[10pt]
 {\sc Vanda Valente} \\
Istituto per le Applicazioni del Calcolo {\it
M. Picone} \\

  Via dei Taurini 19, 00185 Rome, Italy  \\[10pt]
 {\sc Giorgio Vergara Caffarelli} \\
Dipartimento di Scienze di Base e Applicate \\
per l'Ingegneria, 
 Universit\`{a}   di Roma  {\textsc{La Sapienza}} ,\\
 Via Antonio Scarpa 16,  00161 Rome,  
\end{center}
}

\numberwithin{equation}{section}
\allowdisplaybreaks
 \noindent{\bf keywords}:{ magnetic field, elastic body, magneto-elastic energy, functional minimisation}

\begin{abstract}
A model problem of magneto-elastic body is considered. Specifically, the case of a two dimensional circular disk is studied. The functional which represents the magneto-elastic
energy is introduced. Then, the minimisation problem, referring to the simplified two-dimensional model under investigation, is analysed. The existence of a minimiser is 
proved and its dependence on the eigenvalues of the problem is investigated.
 A bifurcation result  is obtained corresponding to special values of the parameters.
\end{abstract}
\vfill\eject
\section{Introduction}
The interest in magneto-elastic materials finds its motivation in the growing 
variety of new materials among which magneto-rheological elastomers or   
magneto-sensitive polymeric composites \cite{Hossain-et-al-2015a, Hossain-et-al-2015b} 
may be mentioned.  A whole Special Issue devoted to {\it Magnetoelastic Materials}
is going to be published soon \cite{SI2020} in the Journal {\it Materials}. 
Many applications of magneto-elastic materials, covering a wide area of interest  from 
technological to biomedical devices, see e.g. \cite{Ren},
can be listed. In particular, also 
two dimensional problems are subject of applicative investigations \cite{Hadda}.
The model we consider is a two dimensional simplified one, however, we believe that, it might 
open the way to further applications, possibly, via perturbative methods \cite{Bernard}.

We study the functional energy of a magneto-elastic material, that is
a material which is capable of deformation and magnetisation. The
magnetisation is a phenomenon that does not appear at a macroscopic
level, it is characterised by the {\it magnetisation vector} whose magnitude 
is independent of the position while its direction which can vary from one point to another.\\
In this context,  the magnetisation vector $\m$ is a map from $\Omega$
(a bounded open set of $\mathbb{R}^2$) to $S^2$ (the unit sphere of
$\mathbb{R}^3$). In particular, here we assume $\Omega$ is the unit
disk of $\mathbb{R}^2$. The magnetisation distribution is well
described by a free energy functional which we assume composed of
three terms, namely the {\it exchange} energy ${E}_{\rm ex}$, the
{\it elastic} energy $E_{\rm el}$ and the {\it elastic-magnetic}
energy $E_{\rm em}$. In Section~\ref{model} we detail the three
energetic terms and, after some simplifications, derive the proposed
functional for describing some phenomena. Assuming the hypothesis of
radially symmetric maps, i.e.
$$ \m = ( \cos \theta \sin h(r), \sin \theta \sin h(r), \cos h(r)
),$$ we get to the analysis of a one-dimensional energy functional
that can be expressed in terms of the only scalar function $h$. The
effect of the elastic deformation reveals through a positive
parameter $\mu$ which characterizes the connection between the
magnetic and elastic processes. In Section~\ref{minproblem} 
 the minimisation of the energy functional, namely
$$E(h)= \pi \int_0^1 \left[h_r^2+\left(\frac{\sin h}{r}\right)^2 -
\frac{\mu}{2} (\sin2h )^2 \right] rdr,$$ is the aim of  our paper.
In particular, we prove that there exists a critical value $\mu^0$
such that for $\mu \le \mu^0$ the functional energy is not negative
and there is only a global minimiser that is the trivial solution
$h\equiv 0$; for $\mu
> \mu^0$ other nontrivial minimisers appear, moreover the energy
takes negative values. The local
bifurcation analysis is carried out. More precisely we prove that at
the point $\mu^0$, two branches of  minimisers, with small norm,
bifurcate from the trivial stable solution. This local analysis does
not exclude the existence of other solutions of the minimisation
problem even for $\mu = 0$ (see also the results by Brezis and Coron
in \cite{BC} concerning the solutions of harmonic maps from the unit
disk in $\mathbb{R}^2$ to the sphere $S^2$).

For the modelling of magneto-elastic interactions see also \cite{BPGV}, \cite{Bw},
\cite{CPV}, \cite{csvv}, \cite{csvv1}, \cite{He}, \cite{VV}. Magneto-viscoelastic problems are studied in \cite{GVS2010},  \cite{GVS2012} and  \cite{MGVS2017}.
Moreover we recall that the phenomenon of bifurcation of minimising
harmonic maps has been studied by Bethuel, Brezis, Coleman, H{\rm
\'{e}}lein (see \cite{BBCH}) in a different physical context.

\medskip

\section{The model} \label{model}
\setcounter{equation}{0}

 We start with the general three-dimensional theory.
We assume $\Omega \subset \mathbb{R}^3$ is the volume of the
magneto-elastic material and $\partial \Omega$ its boundary. Let
$x_i,\, i=1,2,3$ be the position of a point $\x$ of $\Omega$ and
denote by
$$u_i=u_i(\x),\qquad i=1,2,3$$
the components of the displacement vector $\u$ and by
$$\varepsilon_{kl}(\u)=\frac{1}{2}(u_{k,l}+u_{l,k}),\qquad k,l=1,2,3$$
the deformation tensor where, as a common praxis, $u_{k,l}$ stands
for ${\frac{\partial u_k}{\partial x_l}}$. Moreover we denote by
$$m_{j}=m_{j}(\x),\qquad j=1,2,3$$
the components of the magnetisation vector $\m$ that we assume of
unit modulus, i.e. $|\m|=1$. \\
In the sequel, where not specified,
the Latin indices vary in the set \{1,2,3\} and the summation over repeated indices is assumed. 
We first define the exchange energy which arises from exchange 
neighbourhood interactions as
\begin{equation} \label{Eex}
E_{\rm ex}(\m) =\frac{1}{2} \int_{\Omega} a_{ij} m_{k,i}
m_{k,j}d\Omega
\end{equation}
where $a_{ijkl} = a_1\delta_{ijkl} + a_2\delta_{ij}\delta_{kl}$ with $a_1,a_2\ge0$ and  $\delta_{ijkl}=\delta_{ik}\delta_{jl}$ is the fourth-order identity tensor.
%where $(a_{ij})$ is a symmetric positive definite matrix which is
%supposed diagonal for most materials with all diagonal elements
%equal to a positive number $a$.
% The magneto-elastic energy for
%cubic crystals is assumed to be
This integral represents the interface energy between magnetised domains with different orientations. For most magnetic materials $div\, {\bf m}=\delta_{ij}{m}_{i,j}=0$, so hereafter we assume $a_1=a>0$ and $a_2=0$ (see \cite{LL}). 
The magneto-elastic energy is due to the coupling between the magnetic moments and the elastic lattice. For cubic crystals it is assumed to be
\begin{equation} \label{Eem}
E_{\rm em}(\m,\u) =\frac{1}{2} \int_{\Omega} \lambda_{i j k l}
m_{i} m_{j} \varepsilon_{kl}(\u)d\Omega
\end{equation}
where  $\big.\mathbb{L}= \{{\lambda}_{klmn}\}$ denotes the \textit{ {magneto-elasticity tensor}} whose entries $ \lambda_1, \lambda_2, \lambda_3\ge0$, and $\lambda_{i j k l}=\lambda_{1} \delta_{i j k l}+ \lambda_{2}
\delta_{i j }\delta_{kl}+ \lambda_{3}(\delta_{i k}\delta_{j
l}+\delta_{i l}\delta_{j k})$ with $\delta_{i j k l}=1$ if
$i=j=k=l$ and $\delta_{i j k l}=0$ otherwise. Moreover we
introduce the elastic energy
\begin{equation} \label{Eel}
E_{\rm el}({\bf u})= \frac12 \int_\Omega \sigma_{ijkl} 
 \varepsilon_{ij}({\bf u})\varepsilon_{kl}({\bf u})\,d\Omega
 \end{equation}
where $\Big.\mathbb{E}= \{ \epsilon_{lm}\}$ indicates the  \textit{ {strain tensor}}  $\sigma_{ijkl}$ %is the elasticity tensor 
satisfying the following symmetry property 
$$\sigma_{ijkl}=\sigma_{klij}=\sigma_{jilk}$$
and moreover the inequality
$$\sigma_{ijkl}\varepsilon_{ij}\varepsilon_{kl}\ge\beta\varepsilon_{ij}\varepsilon_{ij}$$
holds for some $\beta>0$. In the isotropic case
$$\sigma_{ijkl} = \tau_1\delta_{ijkl} + \tau_2\delta_{ij}\delta_{kl},\qquad \tau_1, \tau_2\ge0.$$
The resulting energy functional ${E}$ is given by
\begin{equation}
E(\m,\u)= E(m,u) = E_{ex} (m) + E_{em} (m,u) + E_{ve} (u),
\end{equation}
which {{after some manipulations \cite{BPGV, GVS2012}}}, under the assumption the material is isotropic, reads
\begin{multline}
E(\m,\u)= 
 \frac{1}{2}\int_{\Omega} a|\nabla {\m}|^2 d\Omega +
\frac{1}{2}\int_{\Omega} \left[\tau_1
|\nabla {\u}|^2+\tau_2 (div\, {\u})^2\right] d\Omega +\\
+\frac{1}{2}\int_{\Omega} \left[\lambda_{1}\delta_{k l i j}% {\partial\over \partial_i} 
u_{j,i}
m_{k}m_{l} +\lambda_{2} |{\m}|^2 div \,{\u }+2\lambda_3(\nabla
u_i\cdot {\m}) m_{i} \right] d\Omega~.
\end{multline}
\subsection{A simplified 2D model}

To get the proposed model we make some approximations. First of all
we assume
 $\Omega \subset \mathbb{R}^2$
 and neglect the components in plane of the
 displacement vector $\u$, i.e.  we assume  ${\u}=(0,0,w)$, which implies $div\, u=0$ since $w$ depends only on the plane coordinates. Let $\lambda_3= \lambda$ be a positive constant,
 %we assume  ${\u}=(0,0,w)$ and
setting\footnote{No need to prescribe  nor $ \lambda_2$ nor $\tau_2\ge0$ since they both appear only as  factors of $div\, u$;  also $\lambda_1$ can be left arbitrary; indeed,   $\delta_{klij}u_{j,i}=0$ since $u_{j,i}\neq0$ only if $j=3$ and $i=1,2$ but $\delta_{klij}=0$ when $ j\neq i$.} %$ \lambda_1=\lambda_2=0,   \lambda_3 =\lambda$, 
$\tau_1= 1$  and $a=1$,  
the functional
${E}$ reduces to
\begin{equation} \label{E}
\displaystyle{{E}(\m,w)= \frac{1}{2}\int_{\Omega} \left( |\nabla {\m}|^2
+ 2\lambda m_3 (m_{\alpha} w_{,\alpha})+|\nabla w|^2\right)d\Omega}
\end{equation}
where the Greek indices vary in the set \{1,2\}.

Setting $\Omega \equiv D = \{ (x,y) \in \mathbb{R}^2 : \, x^2+y^2 <
1\}$ and assuming radial symmetry, further to $w=w(r)$, we can express the components of the  vector $\m$ 
in terms of $r$, that is, of the form
$$ \m = \left( \frac{x}{r} \sin h(r), \frac{y}{r} \sin h(r), \cos h(r)
\right), \qquad r=\sqrt{x^2+y^2},$$ 
where $h: (0,1)\subset \R \to \R$ is an unknown regular function.
Using the fact that $\partial_x r = \frac{x}{r}$ and $\partial_y r = \frac{y}{r}$ we deduce by the chain rule, where  $h_r:=\displaystyle{{ d h} \over {dr}}$  denotes the derivatives of the $h$ with respect to  the variable $r$, it follows
%  noting by $'$ the derivative in $r$
\begin{equation*}
\partial_x \m = \left(\frac{\sin h}{r}+ \frac{x^2}{r}\left(\frac{\sin h}{r}\right)_{\!r}, ~\frac{xy}{r}\left(\frac{\sin h}{r}\right)_{\!r} , ~\frac{x}{r}\left(\cos h\right)_{\!r}\right)
\end{equation*}
\begin{equation*}
\partial_y \m = \left({\frac{xy}{r}\left(\frac{\sin h}{r}\right)_{\!r} ,~ \frac{\sin h}{r}+ \frac{y^2}{r}\left(\frac{\sin h}{r}\right)_{\!r} , ~\frac{y}{r}\left(\cos h\right)_{r} }\right).
\end{equation*}
Thus we get 
\begin{equation*}\displaystyle
\begin{aligned}
|\nabla \m |^2&= \left[\frac{\sin h}{r}+ \frac{x^2}{r}\left(\frac{\sin h}{r}\right)_{\!r} \right]^2 +\left[{\frac{\sin h}{r}+ \frac{y^2}{r}\left(\frac{\sin h}{r}\right)_{\!r} }\right]^2 +
2\left[{\frac{xy}{r}\left(\frac{\sin h}{r}\right)_{\!r}}\right]^2 +[\left(\cos h\right)_{\!r} ]^2\\
&=2\left(\frac{\sin h}{r}\right)^2 + 
\frac{x^4+2x^2y^2 +y^4}{r^2}\left[{\left(\frac{\sin h}{r}\right)_{\!r}}\right]^2 +2 \frac{x^2+y^2}{r^2}\sin h \left(\frac{\sin h}{r}\right)_{\!r} + h_{r}^2 (\sin h)^2 \\
&= 2\left(\frac{\sin h}{r}\right)^2 + {r^2}\left[\left(\frac{\sin h}{r}\right)_{\!r}\right]^2 +2 \sin h\left(\frac{\sin h}{r}\right)_{\!r} + h_{r}^2(\sin h)^2 \\
&= \left(\frac{\sin h}{r}\right)^2 + \left[\frac{\sin h}{r}+{r}\left(\frac{\sin h}{r}\right)_{\!r}\right]^2
 +h_{r}^2 (\sin h)^2 \\
&= \left(\frac{\sin h}{r}\right)^2 + \left[\frac{\sin h}{r}+{r}\left(\frac{r h_{\!r} \cos h - 
\sin h}{r^2}\right)\right]^2 + h_{r}^2 (\sin h)^2 \\
&= \left(\frac{\sin h}{r}\right)^2  +h_r^2.
\end{aligned}
\end{equation*}`

So the energy (\ref{E}), when we recall the assumed radial symmetry implies also $w=w(r)$, adopting the notation $w_r:=\displaystyle{{ d w} \over {dr}}$, becomes
$$
E(h,w)= \pi \int_0^1 \left[h_r^2 +\left(\frac{\sin h}{r}\right)^2
+\lambda \sin2h \, w_r + w_r^2\right] rdr$$
 and from that we deduce the governing
equations
\begin{equation} \label{eqs}
\left \{ \begin {array}{l}\displaystyle{ h_{rr} +\frac{h_r}{r}
-\frac{\sin2h}{2r^2} -\lambda \cos2h \,w_r =0}\\
\\
\displaystyle{ w_{rr} + \frac{w_r}{r} + \frac{\lambda}{2} \left[
(\sin 2h)_r + \frac{\sin 2h}{r}\right]=0}.
\end{array}\right.
\end{equation}
We prescribe the following boundary conditions
\begin{equation} \label{bcw}
w_r(0)=0,\,\,w(1)=0,
\end{equation}
where the first condition is motivated by the symmetry assumptions, while the second one corresponds 
to prescribe  the boundary of $\Omega$ is fixed, and %pectively, 
\begin{equation} \label{bch}
 h_r(1)=0.
 \end{equation}
 Solving the second equation of (\ref{eqs}) which can be written
 \begin{equation*}
(r w_r)_{r} + \frac{\lambda}{2}(r \sin 2h)_{r} =0 \Leftrightarrow w_r = -\frac{\lambda}{2}\sin 2h
\end{equation*}
where the double implication is guaranteed when we set $h(0)=0$. Then, letting
$\mu= \lambda^2/2$ we get  the equation
\begin{equation} \label{eqh}
h_{rr} +\frac{h_r}{r} -\frac{\sin 2h}{2r^2} +\mu \sin 2h \cos2h  =0
\end{equation}
and the energy E becomes
\begin{equation} \label{Eh}
E(h)= \pi \int_0^1 \left[h_r^2+\left(\frac{\sin h}{r}\right)^2 -
\frac{\mu}{2} (\sin2h )^2 \right] rdr.
\end{equation}

The variational analysis of the functional $E(h)$ is the objective
of the following section.

\section{The minimisation problem} \label{minproblem} 

\begin{lemma} \label{lemma1}
 Let us define
\begin{equation}
V=\{v ~\vert ~ v_r, {v\over r}\in L^2(0,1;r dr)\}.
\end{equation}
$V$ is a Hilbert space equipped with the norm
\begin{equation}
\displaystyle{\vert\!\vert v \vert\!\vert^2=
\int_0^1{ ({ v_r}^2+ {v^2\over r^2}) r dr~.}}
\end{equation}
\end{lemma}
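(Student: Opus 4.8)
The norm $\|\cdot\|$ is induced by the symmetric bilinear form
$$(u,v)_V:=\int_0^1\Bigl(u_rv_r+\frac{uv}{r^2}\Bigr)\,r\,dr,$$
which is manifestly bilinear and nonnegative, so the first task is to check that it is \emph{definite}. A function $v$ with $v_r,v/r\in L^2(0,1;r\,dr)$ lies, together with its distributional derivative, in $L^2_{\mathrm{loc}}((0,1])$, hence is (locally) absolutely continuous on $(0,1]$; therefore $\|v\|=0$ forces first $v_r=0$ a.e., so $v$ equals a constant $c$ on $(0,1)$, and then $0=\int_0^1(v^2/r^2)\,r\,dr=c^2\int_0^1 dr/r$ forces $c=0$. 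Thus $(V,\|\cdot\|)$ is a pre-Hilbert space, and only completeness remains.

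Let $(v_n)$ be Cauchy in $V$. Then $(v_{n,r})$ and $(v_n/r)$ are Cauchy in the complete weighted space $L^2(0,1;r\,dr)$, so there exist $g,w\in L^2(0,1;r\,dr)$ with $v_{n,r}\to g$ and $v_n/r\to w$ in $L^2(0,1;r\,dr)$; put $v:=r\,w$, which already gives $v/r=w\in L^2(0,1;r\,dr)$. The crux is to identify $g$ with the weak derivative of $v$. Fix $\delta\in(0,1)$: on $[\delta,1]$ the weight $r$ is bounded above and below by positive constants, so convergence in $L^2(\delta,1;r\,dr)$ is equivalent to convergence in $L^2(\delta,1)$; consequently $v_n/r\to w$ and $v_{n,r}\to g$ in $L^2(\delta,1)$, and multiplying the first convergence by the bounded factor $r$ yields $v_n\to rw=v$ in $L^2(\delta,1)$. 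Hence on $(\delta,1)$ we have $v_n\to v$ and $v_{n,r}\to g$ in $L^2$, so $v\in H^1(\delta,1)$ with $v_r=g$ there; since $\delta\in(0,1)$ was arbitrary, $v\in H^1_{\mathrm{loc}}((0,1])$ and $v_r=g$ a.e.\ on $(0,1)$.

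It follows that $v_r=g$ and $v/r=w$ both belong to $L^2(0,1;r\,dr)$, i.e.\ $v\in V$, and
$$\|v_n-v\|^2=\|v_{n,r}-g\|_{L^2(0,1;r\,dr)}^2+\|v_n/r-w\|_{L^2(0,1;r\,dr)}^2\longrightarrow 0,$$
so $V$ is complete. Equivalently, $v\mapsto(v_r,v/r)$ is an isometry of $V$ onto a subspace of $L^2(0,1;r\,dr)^2$, and the argument above is precisely the verification that this subspace is closed. I expect the localisation-plus-compatibility step to be the only genuine obstacle: the weight degenerates at $r=0$, so one cannot argue on a fixed neighbourhood of the origin, but the endpoint is recovered automatically since $g$ and $w$ are controlled in the full weighted norm, and the same degeneracy at $r=0$ is what makes the definiteness of $\|\cdot\|$ work in the first paragraph.
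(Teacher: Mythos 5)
Your proof is correct and takes essentially the same route as the paper: both arguments converge $v_{n,r}$ and $v_n/r$ in the complete weighted space $L^2(0,1;r\,dr)$, take the candidate limit $v=rw$, and identify $g$ as the distributional derivative of $v$ by working away from the degenerate endpoint $r=0$ (the paper passes through convergence in $\mathcal{D}'(0,1)$, you through $H^1(\delta,1)$ for arbitrary $\delta$, which is the same localisation idea). Your extra paragraph checking definiteness of the norm is a harmless addition not spelled out in the paper.
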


\noindent\begin{proof}

Let $v_n$ be a Cauchy sequence in $V$, $\{(v_n)_r\}, \displaystyle{\left\{{v_n\over r}\right\}}$ are Cauchy sequences in 
 $$L^2(r dr)=L^2(0,1;r dr)$$ and there exist $h, g$ such that  
\begin{equation}\label{m1}
\{(v_n)_r\}, \displaystyle{\left\{{v_n\over r}\right\}} \to g,h~~\text{in}~~L^2(r dr)
\end{equation}
Set $\tilde h=h r$. Since
\begin{equation}\label{1m}
\int_0^1 \left({v_n\over r}-h\right)^2  r dr~\to 0
\end{equation}
one has
\begin{equation}
{v_n\to rh ~~\text{in}~~L^2 \big(0,1;{dr\over r}\big)~.}
\end{equation}
but also in ${\cal D}^{\prime}(0,1)$ so that
\begin{equation}
{(v_n)_r\to \tilde h_r ~~\text{in}~~{\cal D}^{\prime}~.}
\end{equation}
We deduce from \eqref{1m} that $\tilde h_r=g$ and thus $ \tilde h\in V$ and since
\begin{equation}
(v_n)_r, {v_n\over r} \to \tilde h_r, {\widetilde h\over r}~~\text{in}~~L^2(r dr)
\end{equation}
one has $v_n\to \tilde h \in V$. This completes the proof of  Lemma \ref{lemma1}. 

\end{proof}\hfill\eproof

\begin{lemma} \label{lemma2}

$$ V \subset \{v\in C([0,1]) ~\vert ~v(0)=0 \} $$
\end{lemma}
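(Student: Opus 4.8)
The plan is to show that every $v\in V$ has an absolutely continuous representative on the whole interval $[0,1]$ which vanishes at the origin, by combining a local one-dimensional Sobolev embedding on $(0,1)$ with a dedicated argument at the endpoint $r=0$, where the weight $r\,dr$ degenerates. (Near $r=1$ the weight is bounded below and nothing special happens.)

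First I would record two elementary consequences of $v\in V$. On every compact subinterval of $(0,1)$ the weight $r$ is bounded above and below, so $v_r\in L^2(0,1;r\,dr)$ gives $v\in W^{1,1}_{\rm loc}(0,1)$; hence $v$ admits a representative that is absolutely continuous, in particular continuous, on $(0,1)$. Moreover $v/r\in L^2(0,1;r\,dr)$ means $\int_0^1 v^2/r\,dr<\infty$, and since $r\le 1$ this also yields $v\in L^2(0,1)$; restricting to $(1/2,1)$, where the weight is bounded below, we get $v\in H^1(1/2,1)\subset C([1/2,1])$. So the only genuine issue is the behaviour of $v$ as $r\to 0^+$.

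The key step is to estimate $g(r):=v(r)^2$ rather than $v$ itself. On $(0,1)$ the function $g$ is locally absolutely continuous with $g_r=2vv_r$, and splitting $2vv_r=2\,(v/\sqrt r)\,(v_r\sqrt r)$ and using the Cauchy--Schwarz inequality gives
\[
\int_0^1 |g_r|\,dr\le 2\Big(\int_0^1 \frac{v^2}{r}\,dr\Big)^{1/2}\Big(\int_0^1 v_r^{2}\,r\,dr\Big)^{1/2}<\infty .
\]
Hence $g\in W^{1,1}(0,1)$, so $g$ extends to an absolutely continuous function on $[0,1]$ and $L:=\lim_{r\to 0^+}v(r)^2$ exists, finite and nonnegative. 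If $L>0$, there is $\delta_0>0$ with $v(r)^2\ge L/2$ for $0<r<\delta_0$, whence $\int_0^{\delta_0} v^2/r\,dr\ge (L/2)\int_0^{\delta_0}dr/r=+\infty$, contradicting $v/r\in L^2(0,1;r\,dr)$. Therefore $L=0$, i.e. $|v(r)|=\sqrt{g(r)}\to 0$ and so $v(r)\to 0$ as $r\to 0^+$. Setting $v(0)=0$ makes $v$ continuous on $[0,1)$, and together with the continuity on $[1/2,1]$ noted above this yields $v\in C([0,1])$ with $v(0)=0$.

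The main obstacle is precisely the degeneracy of the weight at $r=0$: the naive bound $|v(b)-v(a)|\le\int_a^b|v_r|\,dr\le\|v_r\|_{L^2(r\,dr)}\,(\ln(b/a))^{1/2}$ blows up as $a\to 0$, so one really must exploit the extra hypothesis $v/r\in L^2(0,1;r\,dr)$. Passing through $g=v^2$ is what lets the two integrability conditions combine with the correct power of $r$; everything else in the argument (the local Sobolev embedding, the harmless endpoint $r=1$) is routine.
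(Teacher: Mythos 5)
Your proof is correct and follows essentially the same route as the paper: you control $(v^2)_r=2vv_r$ by splitting it as $2\,(v/\sqrt r)(\sqrt r\,v_r)$ (Cauchy--Schwarz where the paper uses Cauchy--Young) to get a limit $L$ of $v(r)^2$ at $r=0$, and then exclude $L>0$ via the logarithmic divergence of $\int_0^{\delta}dr/r$, exactly the paper's contradiction argument. The only differences are cosmetic packaging (local Sobolev embedding and $W^{1,1}(0,1)$ language instead of the paper's explicit Cauchy-criterion estimates), so the argument stands as a valid rendition of the paper's proof.
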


\noindent\begin{proof}

For $x,y\in (0,1]$ one has 

\begin{equation}
\vert x v(x)- y v(y) \vert = \left\vert \int_x^y {(r v)_r dr }\right\vert =  \left\vert \int_x^y { r (v_r +{v\over r}) dr }\right\vert 
\\ \le \int_x^y { r (|v_r| +{{|v|}\over r}) dr ~.}
\end{equation}
Using the Cauchy-Young  inequality $a \le {1\over 2} a^2 + {1\over 2}$ one gets

\begin{equation}
\vert x v(x)- y v(y) \vert \le \int_x^y \left\{ { {r\over 2} \left({v_r}^2 +{{v^2}\over {r^2}}\right) }
+ r \right\}dr \to 0 ~~
\text{when}~~ y \to x.
\end{equation}
It follows that $v$ is continuous at any point where $r\neq0$ on $[0,1]$. 
Now, one has also
\begin{equation}
\begin{array}{cl@{\hspace{0.5ex}}c@{\hspace{1.0ex}}l}   
\displaystyle v(x)^2 -  v(y)^2 =\!\!  \int_x^y \!\!  { {d\over {dr}} {v(r)}^2 } dr = \!\!  \int_x^y \!\!  { 2 {v_r \, v} }~ dr =
\!\! 
 \int_x^y \!\!  {2 \sqrt{r} ~v_r {v\over {\sqrt{r}}} }dr \\ \\
 \displaystyle \le \int_x^y\!\!  \left\{ { {r} ({v_r}^2 +{{v^2}\over {r^2}})} \right\} ~dr \le \varepsilon 
\end{array}
\end{equation}
for $x,y$ small enough (we used again  the Cauchy-Young  inequality).
\smallskip
Thus, when $x\to0$, $v(x)^2$ is a Cauchy sequence and there exist  $l\ge 0$ such that
\begin{equation}
\displaystyle{\lim_{x\to 0} {v(x)^2}=l}.
\end{equation}
If $l>0$ one has for $\varepsilon$ small enough
\begin{equation}
\Vert v \Vert^2\ge \int_0^1{ {v^2\over r} dr }\ge 
\int_{\varepsilon^2}^\varepsilon{ \left({l\over 2}\right)^2{{dr}\over r} }
={l^2\over 4}(\ln \varepsilon- 2 \ln \varepsilon) = - {l^2\over 4} \ln \varepsilon
\end{equation}
and a contradiction when $\varepsilon\to 0$. Thus, $l=0$ and this completes the proof of the Lemma \ref{lemma2}.
\end{proof} \eproof 

\bigskip
\noindent{\bf Remark}
%One has, of course, 
Since $V\subset H^1( \varepsilon,1)$, it follows that $V \subset C^{1/2}( \varepsilon,1)$ for every $\varepsilon$.

One sets 

\begin{equation}
E(h)= \pi \int_0^1 \left\{ h_r^2+  ({{\sin h}\over{r}})^2 - \frac{\mu}{2} (\sin 2h)^2 \right\} rdr.
\end{equation}

\bigskip\noindent
One would like to show that $E(h)$ possesses a minimiser on $V$ for any $\mu$.

\noindent\begin{lemma} \label{lemma3}

The energy $E(h)$ is bounded from below on $V$ and one can find a minimising sequence $v_n$ such that
\begin{equation}\label{2m}
0\le v_n\le {\pi \over 2}~.
\end{equation}
\end{lemma}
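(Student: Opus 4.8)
The plan is to establish the two claims separately. For the lower bound, since $h_r^2\ge 0$, $(\sin h/r)^2\ge 0$ and $(\sin 2h)^2\le 1$, one gets at once
$$E(h)\ \ge\ -\frac{\pi\mu}{2}\int_0^1 r\,dr\ =\ -\frac{\pi\mu}{4}\qquad(\mu\ge 0),$$
while $E(h)\ge 0$ when $\mu\le 0$; in either case $E$ is bounded from below on $V$, so $m:=\inf_{V}E$ is finite and there is a (not yet normalised) minimising sequence $h_n\in V$.

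To turn it into one taking values in $[0,\pi/2]$, I would introduce the folding map
$$\phi:\R\longrightarrow\Big[0,\frac{\pi}{2}\Big],\qquad \phi(t)=\frac12\arccos(\cos 2t),$$
that is, the even, $\pi$-periodic triangle wave equal to $t$ on $[0,\pi/2]$ and to $\pi-t$ on $[\pi/2,\pi]$. It is $1$-Lipschitz with $|\phi'|=1$ a.e., satisfies $\phi(0)=0$ and $\phi(t)=|t|$ for $|t|\le\pi/2$, and, because $\cos 2\phi(t)=\cos 2t$ and $\sin 2\phi(t)=|\sin 2t|$, it preserves the last two summands of the integrand of $E$:
$$\sin^2\phi(t)=\sin^2 t,\qquad \sin^2 2\phi(t)=\sin^2 2t\qquad(t\in\R).$$
Thus composing $h$ with $\phi$ leaves $(\sin h/r)^2$ and $(\sin 2h)^2$ unchanged, while it cannot increase $h_r^2$.

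Given $h\in V$, I would set $v:=\phi\circ h$ and check $v\in V$ with $0\le v\le\pi/2$. By Lemma~\ref{lemma2}, $h$ is continuous on $[0,1]$ with $h(0)=0$, so $|h|\le\pi/2$ on some $[0,\delta]$, where therefore $v=|h|$; hence $\int_0^\delta v^2 r^{-1}\,dr=\int_0^\delta h^2 r^{-1}\,dr<\infty$, and on $[\delta,1]$ the bounded $v$ trivially has $\int_\delta^1 v^2 r^{-1}\,dr<\infty$. For the derivative, the chain rule for the composition of the Lipschitz map $\phi$ with $h\in H^1(\varepsilon,1)$ gives $|v_r|=|h_r|$ a.e., so $\int_0^1 v_r^2\,r\,dr=\int_0^1 h_r^2\,r\,dr<\infty$; together with $v(0)=0$ this yields $v\in V$. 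Since moreover $v_r^2\le h_r^2$ a.e. and $(\sin v/r)^2=(\sin h/r)^2$, $\sin^2 2v=\sin^2 2h$ pointwise, we obtain $E(v)\le E(h)$. Applying this to the minimising sequence $h_n$ and putting $v_n:=\phi\circ h_n$, we get $m\le E(v_n)\le E(h_n)\to m$, so $(v_n)$ is a minimising sequence with $0\le v_n\le\pi/2$.

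The only point I expect to need genuine care is showing that $v=\phi\circ h$ still belongs to $V$: boundedness of $v$ does not by itself give $v/r\in L^2(0,1;r\,dr)$, so one really uses Lemma~\ref{lemma2} (continuity of $h$ at the origin and $h(0)=0$) to control the integral near $r=0$, and the chain rule for Sobolev functions composed with a non-smooth Lipschitz map for the derivative term — here one invokes that $h_r=0$ a.e. on the preimage of the countable set of corner points of $\phi$.
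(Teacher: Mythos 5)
Your proposal is correct, and at bottom it rests on the same idea as the paper's proof: exploit the symmetries of the integrand ($h\mapsto|h|$ and $h\mapsto\pi-h$ leave $h_r^2$, $(\sin h/r)^2$ and $(\sin 2h)^2$ unchanged) to fold an arbitrary minimising sequence into the strip $[0,\pi/2]$ without increasing the energy. The implementations differ, though. The paper first replaces $v_n$ by $|v_n|$ and then redefines it as $\pi-v_n$ on the set $\{v_n>\pi/2\}$, and has to acknowledge in Remark~\ref{rem-p4} that this single reflection may create negative values, so that finitely many such operations are needed (finiteness being guaranteed because $v_n\in V\subset C([0,1])$ is bounded); the membership $\tilde v_n\in V$ and the energy identity are declared clear. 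You instead compose once with the explicit $1$-Lipschitz fold $\phi(t)=\tfrac12\arccos(\cos 2t)$, which performs all reflections simultaneously, and you supply exactly the verifications the paper omits: the chain rule for a Sobolev function composed with a piecewise-smooth Lipschitz map (with the standard remark that $h_r=0$ a.e.\ on the preimage of the corner set), giving $|v_r|=|h_r|$ a.e., and the integrability of $v^2/r$ near the origin via Lemma~\ref{lemma2}. What your route buys is a one-shot construction with no iteration argument and a fully justified $E(\phi\circ h)=E(h)$ (your weaker $E(\phi\circ h)\le E(h)$ already suffices); what the paper's route buys is a more elementary, pictorial argument (Fig.~1) at the price of the hand-waved ``finite number of operations'' and the unverified claim $\tilde v_n\in V$. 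Either way the lower bound $E(h)\ge-\pi|\mu|/4$ and the conclusion are the same.
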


\noindent\begin{proof}

One has clearly for every $h\in V$
\begin{equation}
E(h)\ge -\pi {|\mu|\over 2}\int_0^1 r dr =  -\pi {|\mu|\over 4}~.
\end{equation}
Thus $$ I= \inf_{h\in V} E(h) $$
exists. Let us denote by $v_n$ a sequence such that
$$ E(v_n) \to I~. $$

If $v_n\in V$, then also   $\vert v_n \vert\in V$ and one has $$ E(v_n)=  E(|v_n|)$$
so, without loss of generality, we assume $v_n\ge 0$.

\bigskip
\begin{figure}[H]
\centering
\epsfig{file=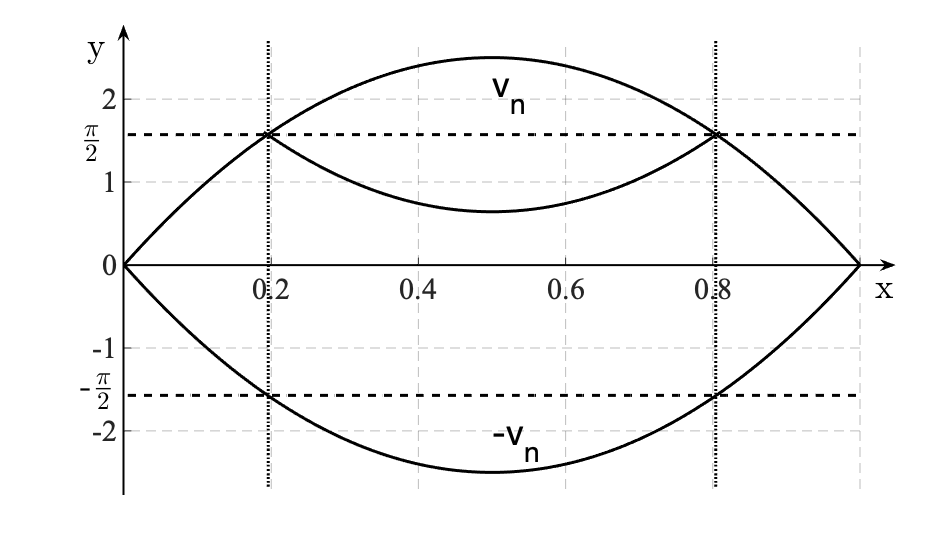, scale=0.31} 
\caption{graphical representation.}
\label{fig4}
\end{figure}

\noindent
Then on $v_n> {\pi \over 2}$, we replace $v_n$ by $-v_n +\pi$ (cfr. Fig. 1).
It is clear that
\begin{equation}
\tilde v_n=v_n X_{\{v_n\le {\pi \over 2}\}} + (-v_n +\pi)X_{\{v_n> {\pi \over 2}\}}
\end{equation}
satisfies $\tilde v_n\in V$ and $$E(\tilde v_n)=E(v_n).$$
This completes the proof of the Lemma \ref{lemma3}.
\end{proof}

\hfill\eproof

\begin{remark}\label{rem-p4} It could be that $-v_n +\pi$ achieves negative values, but clearly, after a finite number of operations like the one we just did we get a $v_n$ satisfying \eqref{2m}.
\end{remark}

\begin{lemma} \label{Lemma 4} There exists a minimiser $\tilde h$ of $E$ in $V$ satisfying
\begin{equation}
0 \le \tilde h\le  {\pi \over 2}~.
\end{equation}
\end{lemma}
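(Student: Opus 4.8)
The plan is to apply the direct method of the calculus of variations to the minimising sequence $v_n$ furnished by Lemma \ref{lemma3}, which we may take to satisfy $0\le v_n\le\pi/2$ and $E(v_n)\to I:=\inf_V E$.

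First I would derive a uniform bound for $\|v_n\|$ in $V$. Since $|\sin 2v_n|\le1$ one has $\tfrac{\mu}{2}\int_0^1(\sin 2v_n)^2\,r\,dr\le\tfrac{|\mu|}{4}$, so from $E(v_n)\le C$ it follows that $\pi\int_0^1\big[(v_n)_r^2+(\sin v_n/r)^2\big]r\,dr\le C+\pi|\mu|/4$. Because $0\le v_n\le\pi/2$, concavity of $\sin$ on $[0,\pi/2]$ (Jordan's inequality) gives $\sin v_n\ge\tfrac{2}{\pi}v_n$, hence $(\sin v_n/r)^2\ge\tfrac{4}{\pi^2}(v_n/r)^2$; combining, $\|v_n\|^2\le C'$ uniformly in $n$. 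By Lemma \ref{lemma1} $V$ is a Hilbert space, so, passing to a subsequence (not relabelled), $v_n\rightharpoonup\tilde h$ weakly in $V$ for some $\tilde h\in V$.

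Next I would upgrade the convergence and pass to the limit in $E$. By the Remark following Lemma \ref{lemma2}, $V\hookrightarrow H^1(\varepsilon,1)$ for every $\varepsilon\in(0,1)$, and $H^1(\varepsilon,1)\hookrightarrow C([\varepsilon,1])$ compactly; hence, by a diagonal argument, $v_n\to\tilde h$ uniformly on $[\varepsilon,1]$ for each $\varepsilon$, so $v_n\to\tilde h$ a.e. on $(0,1)$ and $0\le\tilde h\le\pi/2$. Now I treat the three terms of $E$ separately: the functional $v\mapsto\int_0^1 v_r^2\,r\,dr$ is convex and strongly continuous on $V$, hence weakly lower semicontinuous, so $\int_0^1\tilde h_r^2\,r\,dr\le\liminf\int_0^1(v_n)_r^2\,r\,dr$; the integrand $(\sin v_n/r)^2\,r$ is nonnegative and converges a.e.\ to $(\sin\tilde h/r)^2\,r$, so Fatou's lemma gives $\int_0^1(\sin\tilde h/r)^2\,r\,dr\le\liminf\int_0^1(\sin v_n/r)^2\,r\,dr$; finally $(\sin 2v_n)^2\,r\le r\in L^1(0,1)$, so dominated convergence yields $\int_0^1(\sin 2v_n)^2\,r\,dr\to\int_0^1(\sin 2\tilde h)^2\,r\,dr$. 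Adding these relations, $E(\tilde h)\le\liminf_n E(v_n)=I$, and since $\tilde h\in V$ this forces $E(\tilde h)=I$; thus $\tilde h$ is a minimiser with $0\le\tilde h\le\pi/2$.

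The main obstacle is the singular weight $1/r^2$ carried by the term $(\sin h/r)^2$ near $r=0$: it obstructs a direct dominated-convergence argument on all of $(0,1)$, and it is precisely what makes the bound $0\le v_n\le\pi/2$ essential, since the comparison $\sin v_n\ge\tfrac{2}{\pi}v_n$ is what turns the energy bound into a bound on $\|v_n\|$. The resolution is to use Fatou's lemma (not dominated convergence) for that term, while using the compact embedding away from the origin only to secure a.e.\ convergence; the coupling term, being bounded, causes no difficulty.
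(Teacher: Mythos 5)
Your proposal is correct and follows essentially the same route as the paper: bound the coupling term, use the Jordan-type inequality $\sin v_n\ge \tfrac2\pi v_n$ on $[0,\pi/2]$ to turn the energy bound into a bound on $\|v_n\|_V$, extract a weak limit $\tilde h\in V$, obtain a.e.\ convergence via the $H^1(\varepsilon,1)$ embedding and a diagonal argument, and pass to the limit using weak lower semicontinuity for the gradient term, Fatou for the singular term, and dominated convergence for the $\sin^2 2v_n$ term. The only cosmetic difference is that you invoke weak compactness in the Hilbert space $V$ directly, whereas the paper works with the weak limits of $(v_n)_r$ and $v_n/r$ in $L^2(r\,dr)$ and identifies $\tilde h$ through distributional arguments; these are equivalent.
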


\noindent\begin{proof}  
We consider the sequence $\{v_n\}$ constructed in Lemma \ref{lemma3}.
We claim that $\{v_n\}$  is bounded in $V$ independently of $n$. Indeed, one has, since
for some constant $\lambda>0$ one has $\displaystyle{\left({{\sin x}\over{x}}\right)^2\ge \lambda}$,
$\forall x\in[0,{\pi \over 2}]$,
\begin{equation}  \label{m3}
\begin{array}{cl@{\hspace{0.5ex}}c@{\hspace{1.0ex}}l}
\displaystyle  \int_0^1 {r \left\{ {({v_n})_r}^2 +{{{v_n}^2}\over {r^2}} \right\} dr \le 
\int_0^1 {r \left\{  {({v_n})_r}^2  +{1\over \lambda}\left({{\sin v_n}\over {r}}\right)^2 \right\} dr }}\\
\displaystyle \le \left(1 \vee {1\over \lambda}\right) \int_0^1 {r \left\{  {({v_n})_r}^2  + 
\left({{\sin v_n}\over {r}}\right)^2 \right\} dr \le C}
\end{array}
\end{equation}
where $C$ is a constant independent of $n$ and $\vee$ denotes the maximum of two numbers. 
Recall that since ${v_n}$ is a minimising sequence one has, for $n$ large enough, 
$$ E({v_n}) \le E(0) =0$$
i.e. see the definition of $E$
\begin{equation}
\pi \int_0^1 {\left\{ {({v_n})_r}^2 + \left({{\sin v_n}\over {r}}\right)^2 \right\} r dr}\le 
\pi {|\mu|\over 2}\int_0^1{{\sin^2 (2 v_n)} r dr} \le \pi {|\mu|\over 4}~.
\end{equation}
Since  $\displaystyle{\{({v_n})_r}\}, \{{{v_n}\over r}\}$ are bounded in $L^2(r dr)$ one finds a subsequence, still labelled by $n$, such that 
$$  {{v_n}\over r} {{\rightharpoonup h}} ~~, ~~ {({v_n})_r} \rightharpoonup  g ~ \text{in} ~  L^2(r dr)~.$$
Set $\tilde h =h r$. The first weak convergence above reads
\begin{equation*}
\displaystyle \int_0^1 {{v_n}\over r} \Psi r dr \to  \int_0^1 h \Psi r dr~~~~, ~~~\forall  \Psi\in L^2 (rdr).
\end{equation*}
In particular, taking $\Psi \in {\cal D}(0,1)$ one see that
$$ {v_n} \to \tilde h =h r  ~ \text{in} ~ {\cal D}^{\prime}(0,1)$$
and thus, by the continuity of the derivative in ${\cal D}^{\prime}$
$$({v_n})_r \to \tilde h_r  = g ~~\text{in}~~  {\cal D}^{\prime}(0,1)~.$$
Thus, we have $ \tilde h\in V$. For any $k\ge 2$ one has also, thank to \eqref{m3}, that 
$ {v_n}$ is bounded in $\displaystyle H^{1}\left({1\over k},1\right)$. Thus, by induction, one can find a subsequence
$\{n_{k}\}$ extracted from $\{n_{k-1}\}$ such that
$$ {v_{n_{k}} }\to \tilde h   ~ \text{in} ~ L^{2}\left({1\over k},1\right)~ \text{and a. e..}$$
Then clearly
$$ v_{n_{k}} \to \tilde h  ~~ ~  \text{a.e. on} ~ (0,1).$$
By the dominated Lebesgue theorem one has then that
$$ r \sin 2 {v_{n_{k}} } \to  r \sin 2  \tilde h ~~ ~ \text{in} ~ L^{2}(0,1) $$
$$  \sin {v_{n_{k}} } \to   \sin    \tilde h ~~ ~ \text{a.e.~ on} ~ (0,1)~. $$
Then, since $ x\mapsto x^2$ is convex by the Fatou lemma one has 

\begin{equation}\label{weak-eps} 
\begin{array}{cl@{\hspace{0.5ex}}c@{\hspace{1.0ex}}l}   
{\displaystyle{ I={\underline{\lim}} ~E({v_{n_k} }) = 
\pi\, {\underline{\lim}} \int_0^1 {\left\{ {({v_{n_k} })_r}^2 + \left({{\sin {v_{n_k} }}\over {r}}\right)^2 \right\} r dr}-
\pi \int_0^1{{\mu\over 2}({\sin (2 {v_{n_k} })})^2 r dr}\ge}}\\ \\
\displaystyle{\ge \pi ~ {\underline{\lim}} \int_0^1 {{({v_{n_n} })_r}^2 r dr}+  \pi ~{\underline{\lim}} \int_0^1 {\left({{\sin {v_{n_k} }}\over {r}}\right)^2 r dr}-
{\pi\mu\over 2} \int_0^1{({\sin (2 \tilde h)})^2 r dr}\ge}\\ \\
{\displaystyle{\ge \int_0^1 {({\tilde h_r})^2 r dr} +  \pi  \int_0^1 {{\underline{\lim}}{\left({{\sin {v_n}}\over {r}}\right)^2} r dr} -
{\pi\mu\over 2}\int_0^1{{(\sin (2 \tilde h))^2} r }dr} =}\\ \\
\displaystyle{\hphantom{1\over1}=E({\tilde h}) = I}~.
\end{array}
\end{equation}
This shows that ${\tilde h}$ is the minimiser that we are looking for.
\end{proof} \eproof  
 
 \begin{lemma}\label{Euler-eq}{
The Euler equation of the minimising problem is given by
\begin{equation} \label{m1b}
\left \{ \begin {array}{l}\displaystyle{ -h_{rr}
-\frac{h_r}{r}
+\frac{\sin 2h}{r^2} = \mu \sin 2h \cos 2h } ~~~\text{in}~ (0,1)\\
\\
h(0) =h_r(1)=0
\end{array}\right.
\end{equation}}
\end{lemma}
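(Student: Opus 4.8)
The plan is to derive the Euler--Lagrange equation by the standard first-variation argument, being careful about the weighted-space setting and the absence of a Dirichlet condition at $r=1$. First I would take the minimiser $\tilde h\in V$ produced in Lemma \ref{Lemma 4} and, for an arbitrary test function $\varphi\in\mathcal{D}(0,1)$ (or more generally $\varphi\in V$ with appropriate decay), consider the scalar function $j(t)=E(\tilde h+t\varphi)$. Since the integrand is smooth in $h$ and $h_r$ and the perturbation is compactly supported away from the endpoints, differentiation under the integral sign is justified, so that $j'(0)=0$ gives
\begin{equation*}
\pi\int_0^1\Big[2\,\tilde h_r\varphi_r+2\,\frac{\sin\tilde h\cos\tilde h}{r^2}\varphi-2\mu\sin 2\tilde h\cos 2\tilde h\,\varphi\Big]\,r\,dr=0.
\end{equation*}
Rewriting $\sin\tilde h\cos\tilde h=\tfrac12\sin 2\tilde h$ and dividing by $2\pi$ yields the weak formulation
\begin{equation*}
\int_0^1 r\,\tilde h_r\varphi_r\,dr+\int_0^1\frac{\sin 2\tilde h}{2r}\varphi\,dr=\mu\int_0^1 r\sin 2\tilde h\cos 2\tilde h\,\varphi\,dr,\qquad\forall\varphi\in\mathcal{D}(0,1).
\end{equation*}

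Next I would extract the strong form and the natural boundary condition. Testing against $\varphi\in\mathcal{D}(0,1)$ shows, in the distributional sense on $(0,1)$, that $-(r\tilde h_r)_r+\frac{\sin 2\tilde h}{2r}=\mu r\sin 2\tilde h\cos 2\tilde h$, which upon dividing by $r$ is exactly $-\tilde h_{rr}-\frac{\tilde h_r}{r}+\frac{\sin 2\tilde h}{2r^2}=\mu\sin 2\tilde h\cos 2\tilde h$. (I note the statement as printed has $\sin 2h/r^2$ rather than $\sin 2h/(2r^2)$; I would reconcile this with the sign/normalisation used in \eqref{eqh}, where the coefficient is $\tfrac12$, and state the equation consistently with the energy \eqref{Eh}.) The right-hand side lies in $L^2$ by Lemma \ref{lemma2} (boundedness of $\tilde h$) and the term $\sin 2\tilde h/r^2$ lies in $L^2(r\,dr)$ because $|\sin 2\tilde h|\le C|\tilde h|$ near $0$ and $\tilde h/r\in L^2(r\,dr)$; hence $(r\tilde h_r)_r\in L^1_{loc}$ and in fact $r\tilde h_r\in H^1_{loc}(0,1]$, giving interior regularity $\tilde h\in H^2_{loc}(0,1]$ and classical sense away from the origin.

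For the boundary conditions: $h(0)=0$ is not a variational consequence but is automatic since $\tilde h\in V$ and Lemma \ref{lemma2} gives $V\subset\{v\in C([0,1]):v(0)=0\}$. The condition $h_r(1)=0$ is the natural (Neumann-type) boundary condition: I would redo the first variation allowing $\varphi\in V$ that need not vanish at $r=1$ (which is legitimate since such perturbations keep $\tilde h+t\varphi$ in $V$ and $\tilde h$ is a global minimiser), integrate by parts on $(\epsilon,1)$ and let $\epsilon\to 0$, using that $r\tilde h_r\in H^1(\epsilon,1)$ has a trace at $r=1$ and that the boundary term at $\epsilon$ vanishes because $r\tilde h_r\to 0$ as $r\to0$ (as $\tilde h_r\in L^2(r\,dr)$ forces $\liminf r|\tilde h_r|^2=0$, and combined with the $H^1_{loc}$ bound one gets $r\tilde h_r\to0$). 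This produces the extra term $[\,r\tilde h_r\varphi\,]_{r=1}=\tilde h_r(1)\varphi(1)$, and since $\varphi(1)$ is arbitrary we conclude $\tilde h_r(1)=0$.

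The main obstacle is the behaviour at the singular endpoint $r=0$: one must justify that no boundary contribution appears there when integrating by parts, i.e. that $r\tilde h_r(r)\to 0$, and that the term $\sin 2\tilde h/r^2$ is genuinely in the right space so that the equation holds up to $r=0^+$ in the appropriate sense. This is where Lemma \ref{lemma1} (completeness, so $\tilde h$ genuinely lies in $V$ with $\tilde h/r\in L^2(r\,dr)$) and Lemma \ref{lemma2} ($\tilde h(0)=0$, boundedness) do the essential work; the rest is the routine first-variation computation together with a standard density/elliptic-regularity bootstrap to upgrade the distributional identity to the stated pointwise equation on $(0,1)$.
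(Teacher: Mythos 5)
Your proposal is correct and follows essentially the same route as the paper's proof: compute the first variation of $E$ over $V$, obtain the weak identity and hence the distributional equation $-(r h_r)_r+\frac{\sin 2h}{2r}=\mu\, r\sin 2h\cos 2h$, get $h(0)=0$ from $V\subset\{v\in C([0,1]) : v(0)=0\}$, and recover $h_r(1)=0$ as the natural boundary condition by integrating by parts against variations not vanishing at $r=1$. Your observation about the missing factor $\tfrac12$ in the printed term $\frac{\sin 2h}{r^2}$ is well taken (the paper's own displays \eqref{m3b} and \eqref{eqh} carry the $\tfrac12$), and your extra care with the boundary term at the singular endpoint $r=0$ only makes rigorous what the paper states ``in a weak sense''.
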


\noindent\begin{proof}
If $h$ is a minimiser of $E$ on $V$ one has
$$ {d \over{d \lambda}} E(h+\lambda v) \vert_0=0~~~,~~ \forall v\in V $$
Since 
\begin{equation}
\displaystyle{ E(h+\lambda v)=
\pi   \int_0^1 {\left\{ (h+\lambda v)_r^2 +{ {{\sin(h+\lambda v)^2}}\over{{r}^2}} 
-{\mu\over 2} {\sin (2 (h+\lambda v))}^2
\right\} }r dr}~.
\end{equation}
One gets $\forall v$
\begin{equation}\label{m2b} 
\begin{array}{cl@{\hspace{0.5ex}}c@{\hspace{1.0ex}}l}   
\displaystyle{ 
 \int_0^1 {\left\{ 2 h_r v_r + 2 {{ \sin h\cos h }\over {{r}^2}}v 
-2{\mu}\, {\sin (2 h)\cos (2 h)}\, v
\right\} r dr}=0 }\\ \\
\displaystyle{ \Longleftrightarrow
 \int_0^1 {\left\{  h_r v_r +  {{\sin  (2 h)}\over {2{r}^2}} v
-{\mu}\, {\sin (2 h)\cos (2 h)}\, v
\right\} r dr}=0~~~, ~~~\forall v\in V}~.
\end{array}
\end{equation}
Thus, in the distributional sense
\begin{equation}\label{m3b}
\begin{array}{cl@{\hspace{0.5ex}}c@{\hspace{1.0ex}}l}   
\displaystyle{- ( r h_r )_r +  {{\sin  (2 h)}\over {2{r}}} 
-{\mu}\,r\, {\sin (2 h)\cos (2 h)} =0} \\ \\
\displaystyle{ \Longrightarrow  - r h_{rr} -h_r +  {{\sin  (2 h)}\over {2{r}}} 
-{\mu}\,r\, {\sin (2 h)\cos (2 h)} =0~.}
\end{array}
\end{equation}
Dividing by $r$ we get the first equation of \eqref{m1b}. Integrating by parts in \eqref{m2b}
and using \eqref{m3b} we get
\begin{equation*}
\displaystyle{  \int_0^1 { ( r h_r v )_r - ( r h_r)_r v+  {{\sin  2 h v}\over {2{r}}} 
-{\mu} {\sin (2 h)\cos 2 h r v}} =0~~~, ~~\forall v \in V } 
\end{equation*}
i.e.
\begin{equation*}
\displaystyle{  \int_0^1 { ( r h_r v )_r } =0~~~, ~~\forall v \in V } 
\end{equation*}
which gives 
$$ h_r(1)=0~. $$
(in a weak sense) $h(0)=0$ follows from $h \in V$. This completes the proof of the Lemma.

 \eproof  \end{proof}

 \begin{lemma}\label{p2b}
 If $h\neq 0$ is a nonnegative minimiser of $E$ on $V$ then $h>0$ on $(0,1)$.
\end{lemma}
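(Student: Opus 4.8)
The plan is to use a strong maximum principle argument together with the equation \eqref{m1b} satisfied by the minimiser. First I would note that, by Lemma \ref{Euler-eq}, a nonnegative minimiser $h$ solves $-(rh_r)_r + \frac{\sin 2h}{2r} = \mu\, r \sin 2h\cos 2h$ in the distributional sense on $(0,1)$, and by the Remark preceding Lemma \ref{lemma3} (namely $V\subset H^1(\varepsilon,1)$ for every $\varepsilon$, hence $V\subset C^{1/2}(\varepsilon,1)$), together with elliptic regularity applied locally away from $r=0$, $h$ is in fact a classical $C^2$ solution on every interval $[\varepsilon,1]$. Writing $a(r):=\frac{\sin 2h}{2r h}-\mu r\,\frac{\sin 2h\cos 2h}{h}$ when $h\neq 0$ (and noting this coefficient is bounded on $[\varepsilon,1]$ since $0\le h\le\pi/2$ and $\sin 2h \le 2h$), the equation rewrites as $-(rh_r)_r + a(r)\,r\, h = 0$, a linear second-order ODE in divergence form with bounded coefficients and nonnegative solution $h$.

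Next I would argue by contradiction: suppose $h\not\equiv 0$ and $h(r_0)=0$ for some $r_0\in(0,1)$. Since $h\ge 0$, the point $r_0$ is an interior minimum. Apply the strong maximum principle (Hopf's lemma in one dimension) to the operator $-(r\,\cdot_r)_r + a^+(r)\, r\,(\cdot)$, which is the standard trick of moving only the nonnegative part $a^+$ of the zeroth-order coefficient to the left so that the operator satisfies the sign condition required for the maximum principle: since $-(rh_r)_r + a^+(r) r h = (a^+(r)-a(r))\, r\, h \ge 0$ (as $a^+\ge a$ and $h\ge 0$), $h$ is a supersolution, and a supersolution which is nonnegative and vanishes at an interior point must be identically zero on a neighbourhood of that point. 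A continuation argument then forces $h\equiv 0$ on a maximal subinterval, and since $r_0$ can be taken to be any interior zero, one propagates the vanishing; combined with $h_r(1)=0$ this would make $h\equiv 0$ on $(0,1)$, contradicting $h\neq 0$. Hence $h>0$ on $(0,1)$.

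The only delicate point is the behaviour near the endpoint $r=0$, where the coefficient of the ODE is singular and Lemma \ref{lemma2} only gives $h(0)=0$; but this does not affect the conclusion on the open interval $(0,1)$, since the maximum principle argument is entirely interior — one never needs to apply Hopf's lemma at $r=0$ itself. I expect the main obstacle to be the bookkeeping in the continuation step: one must check that the set $\{r\in(0,1): h(r)=0\}$ is both open (by the local vanishing just described) and closed (by continuity of $h$) in $(0,1)$, hence all of $(0,1)$ once it is nonempty, which is where the hypothesis $h\neq 0$ is finally used to derive the contradiction. Once that is in place, the statement follows.
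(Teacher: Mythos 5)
Your argument is correct, but it follows a genuinely different route from the paper. The paper's proof is the elementary ODE one: since $h\ge 0$ is smooth on $(0,1)$ and vanishes at an interior point $r_0$, that point is an interior minimum, so $h(r_0)=h_r(r_0)=0$; the Euler--Lagrange equation is a second-order ODE whose right-hand side is locally Lipschitz in $(h,h_r)$ away from $r=0$, so uniqueness for the Cauchy problem with zero data forces $h\equiv 0$, a contradiction. You instead never use $h_r(r_0)=0$: you rewrite the equation as $-(rh_r)_r+a(r)\,r\,h=0$ and invoke the interior strong maximum principle (what you call ``Hopf's lemma'' is really the interior strong maximum principle, not the boundary-point lemma), with the standard device of moving $a^+$ to the left so the zeroth-order coefficient is nonnegative; this is legitimate, and your bound on $a$ is fine since $|\sin 2h|\le 2|h|$ (so neither the extra assumption $h\le \pi/2$ nor the restriction ``when $h\neq 0$'' is needed --- just define $a$ through the smooth extension of $\sin(2s)/s$). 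Two small remarks: the open/closed continuation step is superfluous, since the strong maximum principle already yields $h\equiv 0$ on the whole connected component $(\varepsilon,1)\ni r_0$ for every $\varepsilon>0$, hence on $(0,1)$; and the mention of $h_r(1)=0$ plays no role. As for what each approach buys: the paper's ODE-uniqueness argument is shorter and perfectly adapted to this one-dimensional setting, while your maximum-principle argument is more robust --- it requires only the supersolution structure and nonnegativity, needs no pointwise information on $h_r$ at the zero, and would survive essentially unchanged for weak solutions or in a genuinely multi-dimensional version of the problem.
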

\begin{proof}
Indeed, if $h$ vanishes at $r_0\in(0,1)$ then, since $h$ is smooth and $r_0$ is a minimum
for $h$, one would have
$$ h(r_0)=  h_r(r_0)=0$$
then from the theory of o.d.e's (see [1]), $h\equiv 0.$ 
\eproof  \end{proof}

 \begin{lemma}\label{4.8}
 If $h$ is a positive minimiser of $E$  then $0<h\le {\pi\over 2}$.
\end{lemma}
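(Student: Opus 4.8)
\noindent\emph{Proof plan.}\quad The plan is to argue by contradiction, exploiting the energy-preserving reflection $h\mapsto\pi-h$ already used in Lemma~\ref{lemma3}, together with the fact (used in Lemma~\ref{p2b}) that a minimiser is a smooth, classical solution of \eqref{m1b} on $(0,1)$.

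First I would record the preliminary facts: a positive minimiser $h$ is continuous on $[0,1]$ with $h(0)=0$ (Lemma~\ref{lemma2}), it solves \eqref{m1b} classically on $(0,1)$ with $h_r(1)=0$, and $0<h$ on $(0,1)$ by Lemma~\ref{p2b}, so only the bound $h\le\pi/2$ is at stake. Suppose, for contradiction, that $h(r_*)>\pi/2$ for some $r_*\in(0,1]$. Since $h(1)=\lim_{r\to1^-}h(r)$ one may assume $r_*\in(0,1)$; setting $r_0:=\inf\{\,r\in(0,1):h(r)\ge\pi/2\,\}$, continuity together with $h(0)=0$ forces $r_0\in(0,r_*)$, $h(r_0)=\pi/2$, $h<\pi/2$ on $[0,r_0)$, and hence $h_r(r_0)\ge0$.

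The argument then splits into two cases. If $h_r(r_0)=0$, then $h$ solves \eqref{eqh} with Cauchy data $\big(h(r_0),h_r(r_0)\big)=(\pi/2,0)$ at the regular point $r_0>0$; but the constant function $\pi/2$ solves \eqref{eqh} as well (there $\sin2h\equiv0$ and the equation reduces to $0=0$), with the same data, so uniqueness for the ODE away from $r=0$ gives $h\equiv\pi/2$ on $(0,1)$, contradicting $h(0)=0$. If instead $h_r(r_0)>0$, I would fold $h$ through the $\pi$-periodic triangle wave $\Phi:\mathbb{R}\to[0,\pi/2]$ defined by $\Phi(t)=t$ on $[0,\pi/2]$ and $\Phi(t)=\pi-t$ on $[\pi/2,\pi]$, and put $g:=\Phi\circ h$. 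Since $|\Phi'|=1$ a.e., $\sin^2\Phi(t)=\sin^2 t$ and $\sin^2 2\Phi(t)=\sin^2 2t$, each of the three integrands of $E$ is left unchanged, so $E(g)=E(h)=I$; moreover $g\in V$ (it is bounded, $|g_r|=|h_r|$ a.e., and $g=h$ near $r=0$ because $h(0)=0$), hence $g$ is again a minimiser and so, by the smoothness of minimisers, $g\in C^1((0,1))$. But $g=h$ on $[0,r_0]$ while $g=\pi-h$ on a right neighbourhood of $r_0$ (there $h\in(\pi/2,\pi)$, since $h$ crosses $\pi/2$ with positive slope), so $g'$ equals $h_r$ immediately to the left of $r_0$ and $-h_r$ immediately to its right; continuity of $g'$ at $r_0$ then forces $h_r(r_0)=-h_r(r_0)$, contradicting $h_r(r_0)>0$. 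Either way a contradiction is reached, so $h\le\pi/2$ on $(0,1)$ and, by continuity, on $[0,1]$.

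The main obstacle will be the second case: one must check carefully that $g=\Phi\circ h$ is still admissible, i.e.\ $g\in V$, and still a global minimiser of $E$, so that the smoothness of minimisers applies to it and the corner of $g$ at $r_0$ becomes an actual contradiction; the rest is routine ODE uniqueness and one-sided limits. It is worth noting that the cruder device of replacing $h$ by the truncation $\min(h,\pi/2)$ would not suffice, because on $\{h>\pi/2\}$ one has $(\sin h/r)^2<1/r^2$ and the energy need not decrease --- so it is essential that the reflection preserves $E$ exactly.
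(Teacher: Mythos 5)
Your proof is correct and follows essentially the same route as the paper: reflect the part of $h$ above $\pi/2$ (the Fig.~1 construction), note the reflected function is again a minimiser hence smooth on $(0,1)$, so either its derivative jumps at the crossing point (contradiction) or $h_r=0$ there, in which case ODE uniqueness against the constant solution $\pi/2$ contradicts $h(0)=0$. Your write-up merely makes explicit the steps the paper leaves implicit (admissibility $g\in V$, energy invariance, and the one-sided derivative computation).
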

\begin{proof}
If not then $h$ constructed as in the figure before (Fig.1) is a minimiser but it has a jump in the derivative unless this one is $0$. But then $h= {\pi\over 2}$ is solution of the o.d.e. on $h >{\pi\over 2}$  and a contradiction follows. Note that the solution of the elliptic equation (\ref{m1}) is smooth on $(0,1)$. 
\end{proof}

 \begin{lemma}\label{4.9b}
A minimiser cannot vanish  on $(0,1)$ unless it vanishes identically.
\end{lemma}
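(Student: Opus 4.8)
The plan is to argue by contradiction using the ODE uniqueness already invoked in Lemma \ref{p2b}, but now I must handle the possibility that the zero of the minimiser occurs at an \emph{endpoint} rather than an interior point, since Lemma \ref{p2b} only covered interior zeros. So suppose $h$ is a minimiser of $E$ on $V$ which vanishes at some point but is not identically zero. By Lemma \ref{lemma2} we automatically have $h(0)=0$, so that endpoint carries no information; the content is really about a zero at $r=1$ or at an interior point. The interior case is exactly Lemma \ref{p2b} (after replacing $h$ by $|h|$, which is still a minimiser by the computation in Lemma \ref{lemma3}, so that the zero becomes a minimum and hence $h_r=0$ there too, forcing $h\equiv 0$ by the Cauchy--Lipschitz theorem applied to the smooth ODE \eqref{eqh} away from $r=0$). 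So the only genuinely new case is a zero at $r=1$.

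First I would record that a minimiser $h$ is a classical solution of \eqref{eqh} on $(0,1)$ and, by standard elliptic regularity up to the boundary (the remark after Lemma \ref{lemma2} gives $H^1$ and then bootstrap), is smooth on $(0,1]$, with the Neumann condition $h_r(1)=0$ from Lemma \ref{Euler-eq}. Now suppose $h(1)=0$. Combined with $h_r(1)=0$, the uniqueness theorem for the ODE \eqref{eqh} — which is non-singular on a neighbourhood of $r=1$ — forces $h\equiv 0$ on a left neighbourhood of $1$, and then, propagating along $(0,1)$ where the equation remains regular, $h\equiv 0$ on all of $(0,1)$. This contradicts $h\not\equiv 0$. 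Hence a minimiser cannot vanish at $r=1$ either, and together with Lemma \ref{p2b} this proves that a non-trivial minimiser is nowhere zero on $(0,1)$; by continuity and $h(0)=0$, and after the sign normalisation $|h|$, such a minimiser is strictly positive on $(0,1)$.

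The main obstacle I anticipate is the boundary regularity and the legitimacy of evaluating $h_r(1)=0$ pointwise: a priori the minimiser lies only in $V\subset H^1(\varepsilon,1)$, so one must upgrade to a classical solution near $r=1$ before quoting ODE uniqueness. This is routine — the equation \eqref{m1b} has smooth coefficients near $r=1$ and the natural boundary condition is the Neumann one — but it should be stated carefully. A secondary subtlety is that the reduction $h\mapsto|h|$ must be checked to preserve both membership in $V$ and the value of $E$ (already done in Lemma \ref{lemma3}) and to turn an interior zero into a point where the derivative also vanishes; for a zero at the right endpoint this reduction is unnecessary since $h_r(1)=0$ holds regardless of sign.
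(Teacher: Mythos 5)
Your argument is correct and is essentially the paper's own proof: pass to $|h|$ (still a minimiser, hence a smooth solution of the Euler equation on $(0,1)$), note that at an interior zero the derivative must then vanish as well (otherwise $|h|$ would have a corner, contradicting its smoothness as a minimiser), and conclude $h\equiv 0$ by ODE uniqueness away from $r=0$. Your additional treatment of a zero at $r=1$ via $h_r(1)=0$ is harmless but not required, since the statement only concerns zeros in the open interval $(0,1)$.
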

\begin{proof}
If  $h$ is  a minimiser,  $\vert h \vert $ is also a minimiser. But, then, $\vert h \vert $ would 
have a jump discontinuity in its derivative unless when it vanishes so does $h_r$. 
This implies (theory of o.d.e's),  $h=0$.
\end{proof}

\rightline \eproof

\begin{lemma}\label{4.9}
If $h \in V$ then   $\sin (kh) \in V ~~, ~~ \forall k\in\R$.
\end{lemma}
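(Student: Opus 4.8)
The plan is to combine the elementary inequality $|\sin t|\le |t|$ with the chain rule for the composition of a Sobolev function with a Lipschitz map. First I would record what is already available about elements of $V$: by Lemma~\ref{lemma2} and the Remark following it, every $v\in V$ is continuous on $[0,1]$ with $v(0)=0$, and $v\in H^1(\varepsilon,1)$ for each $\varepsilon\in(0,1)$. In particular $h$ is a genuine pointwise-defined function on $[0,1]$, and $h\in H^1(\varepsilon,1)$ for every $\varepsilon\in(0,1)$.

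Since $t\mapsto\sin(kt)$ is globally Lipschitz with constant $|k|$, the standard chain rule for the composition of an $H^1$ function with a Lipschitz function gives $\sin(kh)\in H^1(\varepsilon,1)$ for every $\varepsilon\in(0,1)$, with
\[
(\sin(kh))_r = k\cos(kh)\,h_r \qquad\text{a.e. on }(0,1).
\]
Patching these local statements together, $\sin(kh)$ (which is bounded, hence in $L^1_{\mathrm{loc}}(0,1)$) has a distributional derivative on all of $(0,1)$, equal to $k\cos(kh)\,h_r$.

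It then remains to check that both pieces of the $V$-norm from Lemma~\ref{lemma1} are finite. For the derivative term, $\bigl|(\sin(kh))_r\bigr| = |k\cos(kh)\,h_r|\le |k|\,|h_r|$, so
\[
\int_0^1 (\sin(kh))_r^2\, r\,dr \;\le\; k^2\int_0^1 h_r^2\, r\,dr \;<\;\infty .
\]
For the singular term, the pointwise bound $|\sin(kh(r))|\le |k|\,|h(r)|$ yields
\[
\int_0^1 \frac{\sin^2(kh)}{r^2}\, r\,dr \;\le\; k^2\int_0^1 \frac{h^2}{r^2}\, r\,dr \;<\;\infty .
\]
Hence $\sin(kh)\in V$, and in fact one even gets the quantitative bound $\|\sin(kh)\|\le |k|\,\|h\|$.

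The only delicate point is minor and purely of a measure-theoretic nature: justifying the Lipschitz chain rule and confirming that the locally defined candidate $k\cos(kh)h_r$ is indeed the distributional derivative on the whole of $(0,1)$. The endpoint $r=0$ is not handled by any derivative argument but directly by the weighted $L^2$ bound above; this is exactly the place where $h(0)=0$ — equivalently $h/r\in L^2(0,1;r\,dr)$ — is used, so that $\sin(kh)/r$ inherits square-integrability from $h/r$.
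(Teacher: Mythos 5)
Your proof is correct and follows essentially the same route as the paper: the pointwise identities $(\sin(kh))_r = k\cos(kh)\,h_r$ and $|\sin(kh)|\le |k|\,|h|$ give both terms of the $V$-norm, yielding $\Vert \sin(kh)\Vert \le |k|\,\Vert h\Vert$ exactly as in the paper's argument. The only difference is that you justify the chain rule via the Lipschitz-composition theorem on $H^1(\varepsilon,1)$, a detail the paper leaves implicit; this is a welcome but not essential addition.
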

\begin{proof}
One has
\begin{equation}
\sin (kh)_r= k h_r\cos (kh)~~,~~\vert \sin (kh)\vert \le \vert kh \vert.
\end{equation}
Therefore one has 
\begin{equation}
\begin{array}{cl@{\hspace{0.5ex}}c@{\hspace{1.0ex}}l}   
\displaystyle
\Vert \sin (kh) \Vert^2 =  \int_0^1 \left\{  \sin (kh)_r^2 + \left( { \sin (kh) \over r}\right)^2 \right\} r dr\\ \\
\displaystyle{ \le  \int_0^1 \left\{ k^2 \cos (kh)^2  h_r^2+ k^2 {h^2 \over {r^2}} \right\} r dr \le  k^2 \Vert h\Vert ^2 }
\end{array}
\end{equation}
\end{proof}
\bigskip

\noindent It easy to check that $h\equiv 0$ solves (\ref{bch}),
(\ref{eqh}) and hence it is a stationary point of the functional
(\ref{Eh}). \\
Let $\gamma_0$ be the first eigenvalue of the problem
\begin{equation} \label{eigen}
\left \{ \begin {array}{l}\displaystyle{ -\phi_{rr}
-\frac{\phi_r}{r}
+\frac{\phi}{r^2} = \gamma \phi }\\
\\
\phi(0) =0, \quad \phi_r(1)=0 ~.
\end{array}\right.
\end{equation}
\begin{lemma}\label{gamma-0}
$$\gamma_0>1~~.$$
\end{lemma}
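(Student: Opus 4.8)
The plan is to obtain the bound from the variational characterisation of $\gamma_0$ together with the elementary fact that $1/r^2>1$ on $(0,1)$, so that the radial Hardy term $\int_0^1 v^2/r^2\,r\,dr$ already dominates $\int_0^1 v^2\,r\,dr$ strictly.

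First I would recall that $\gamma_0$, being the first eigenvalue of \eqref{eigen}, admits the Rayleigh--Ritz characterisation
\begin{equation*}
\gamma_0=\inf_{v\in V,\ v\neq 0}\ \frac{\displaystyle\int_0^1\Big(v_r^2+\frac{v^2}{r^2}\Big)r\,dr}{\displaystyle\int_0^1 v^2\,r\,dr}=\inf_{v\in V,\ v\neq 0}\ \frac{\Vert v\Vert^2}{\displaystyle\int_0^1 v^2\,r\,dr},
\end{equation*}
where the denominator is finite and $\le\Vert v\Vert^2$ since $\int_0^1 v^2 r\,dr\le\int_0^1\frac{v^2}{r^2}r\,dr$ for $r\in(0,1)$. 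Moreover, by the standard spectral theory for such self-adjoint problems with compact resolvent, this infimum is attained by a first eigenfunction $\phi_0\in V\setminus\{0\}$, and testing the weak form of \eqref{eigen}, namely $\int_0^1(\phi_{0r}v_r+\phi_0 v/r^2)r\,dr=\gamma_0\int_0^1\phi_0 v\,r\,dr$ for all $v\in V$, against $v=\phi_0$ yields the identity $\Vert\phi_0\Vert^2=\gamma_0\int_0^1\phi_0^2\,r\,dr$.

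The conclusion is then immediate. Since $0<r<1$ on $(0,1)$ we have $1/r^2>1$ there, hence
\begin{equation*}
\int_0^1\frac{\phi_0^2}{r^2}\,r\,dr-\int_0^1\phi_0^2\,r\,dr=\int_0^1\phi_0^2\Big(\frac{1}{r^2}-1\Big)r\,dr>0,
\end{equation*}
the last inequality being strict because the integrand is nonnegative and, as $\phi_0\not\equiv 0$ (and $\phi_0$ is continuous by Lemma \ref{lemma2}), strictly positive on a set of positive measure. Dropping the nonnegative term $\int_0^1\phi_{0r}^2 r\,dr$ we get
\begin{equation*}
\gamma_0\int_0^1\phi_0^2\,r\,dr=\Vert\phi_0\Vert^2\ \ge\ \int_0^1\frac{\phi_0^2}{r^2}\,r\,dr\ >\ \int_0^1\phi_0^2\,r\,dr,
\end{equation*}
and dividing by $\int_0^1\phi_0^2\,r\,dr>0$ gives $\gamma_0>1$.

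The only point needing genuine care is the existence of a \emph{nontrivial} minimiser $\phi_0$ (equivalently, that $\gamma_0$ is actually achieved): this rests on the compactness of the embedding $V\hookrightarrow L^2(0,1;r\,dr)$, which follows from the observation $V\subset H^1(\varepsilon,1)$ (a bounded sequence in $V$ is bounded in $H^1(\tfrac1k,1)$ for each $k$, hence precompact there by Rellich) together with the uniform smallness $\int_0^{1/k}v^2\,r\,dr\le\tfrac1{k^2}\Vert v\Vert^2$ near the origin, via a diagonal extraction; weak lower semicontinuity of $\Vert\cdot\Vert^2$ then closes the minimisation, exactly in the spirit of Lemma \ref{Lemma 4}. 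I expect this compactness/attainment step to be the main obstacle; note that without it the pointwise estimate above only yields $\gamma_0\ge 1$, since a priori the infimum of a family of numbers each strictly larger than $1$ could still equal $1$. Alternatively one can run the argument by contradiction directly on a minimising sequence $v_n$ normalised by $\int_0^1 v_n^2 r\,dr=1$: the resulting $V$-bound forces, along a subsequence, $L^2_{\mathrm{loc}}(0,1)$-convergence to a function with vanishing derivative, i.e. a constant, which must be $0$ since constants do not lie in $V$, while the mass near $0$ is uniformly small — contradicting $\int_0^1 v_n^2 r\,dr=1$.
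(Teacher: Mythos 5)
Your proof is correct, but it takes a genuinely different route from the paper. The paper argues by contradiction with a maximum-principle/monotonicity argument on the eigenfunction itself: if $\gamma_0\le 1$ and $\phi\ge 0$ is the first eigenfunction, then $-(r\phi_r)_r=r\phi\,(\gamma_0-1/r^2)\le 0$, so $r\phi_r$ is nondecreasing; since $\phi_r(1)=0$ this forces $\phi_r\le 0$, hence $\phi$ is nonincreasing with $\phi(0)=0$, so $\phi\equiv 0$, a contradiction. You instead work with the Rayleigh quotient: testing the weak form with the minimiser $\phi_0$ and using the strict pointwise inequality $1/r^2>1$ on $(0,1)$ gives $\gamma_0\int_0^1\phi_0^2\,r\,dr\ge\int_0^1\phi_0^2 r^{-2}\,r\,dr>\int_0^1\phi_0^2\,r\,dr$. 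You correctly identified that the only delicate point in your route is attainment of the infimum (otherwise one only gets $\gamma_0\ge 1$), and your compactness sketch ($H^1(1/k,1)$ bounds plus the uniform tail estimate $\int_0^{1/k}v^2r\,dr\le k^{-2}\Vert v\Vert^2$, diagonal extraction, weak lower semicontinuity) is sound; your alternative contradiction argument on a normalised minimising sequence also works, provided one states the standing assumption $\gamma_0\le1$ so that the derivative term is forced to vanish. Comparing the two: the paper's proof is shorter and needs no compactness discussion, but it tacitly uses existence, nonnegativity and regularity of the first eigenfunction to run the ODE argument; your proof needs the compact embedding $V\hookrightarrow L^2(0,1;r\,dr)$ but requires neither a sign condition nor a maximum principle, and it furnishes as a by-product the variational characterisation of $\gamma_0$ (the inequality \eqref{pointca}) that the paper invokes later in Lemma \ref{biforcation}.
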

\begin{proof}
Suppose not,  i.e. $\gamma_0\le1$.
Let $\phi$ be the corresponding positive (or nonnegative) eigenfunction.
One has 
\begin{equation}
 \begin {array}{l} \displaystyle{ -\phi_{rr}
-\frac{\phi_r}{r}= \phi (\gamma_0- {1\over {r^2}}) \le 0
~~~\text{since}~r\in (0,1)}\\ \\
\displaystyle{ (r \phi_r)_r \ge 0 \Longrightarrow r \phi_r \nearrow ~~
\Longrightarrow ~~r \phi_r \le 0 ~~\text{since}~~   \phi_r (1)=0~.}
\end{array}
\end{equation}
Thus, the maximum of $\phi$ is achieved at $0$ but, since $\phi(0)=0$,
we get a contradiction i.e. $\phi\equiv0$.
\hfill\eproof
\end{proof}

\medskip\noindent
We have the following bifurcation lemma.

\begin{lemma}\label{biforcation}
If $\mu \le \gamma_0/2 $ we have $E(h) \ge 0$ and the global minimum
is attained  only for $h\equiv 0$. For $\mu > \gamma_0/2 $ the
global minimum is negative.
\end{lemma}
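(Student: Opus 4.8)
The plan is to compare $E$ with the Rayleigh quotient of the eigenvalue problem \eqref{eigen}, treating the two regimes $\mu\le\gamma_0/2$ and $\mu>\gamma_0/2$ separately. The tool I would use in both is the variational inequality
\begin{equation*}
\Vert v\Vert^2=\int_0^1\Bigl(v_r^2+\frac{v^2}{r^2}\Bigr)r\,dr\ \ge\ \gamma_0\int_0^1 v^2\,r\,dr\qquad\text{for all }v\in V,
\end{equation*}
namely the fact that the first eigenvalue $\gamma_0$ of \eqref{eigen} is the minimum over $V\setminus\{0\}$ of the Rayleigh quotient of the symmetric coercive form $\Vert\cdot\Vert^2$ on $V$; this follows from standard spectral theory, the embedding $V\hookrightarrow L^2(0,1;r\,dr)$ being compact (Lemma \ref{lemma2} together with the inclusion $V\subset H^1(\varepsilon,1)$). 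I would also record, by testing \eqref{eigen} against a first eigenfunction $\phi\in V$, the identity $\Vert\phi\Vert^2=\gamma_0\int_0^1\phi^2\,r\,dr$.

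For $\mu\le\gamma_0/2$, given $h\in V$ I would set $\psi=\sin h$, which lies in $V$ by Lemma \ref{4.9} with $k=1$; using $\psi_r=h_r\cos h$ and $\sin^2 2h=4\psi^2\cos^2 h$ one has $h_r^2-\psi_r^2=h_r^2\sin^2 h\ge0$ and $2\psi^2-\tfrac12\sin^2 2h=2\sin^4 h\ge0$. A one-line rearrangement (the first two terms below sum to $\pi\int_0^1(h_r^2+\sin^2 h/r^2)r\,dr$, the $\mu$-terms to $-\tfrac{\pi\mu}{2}\int_0^1\sin^2 2h\,r\,dr$) then gives, for $\mu\ge0$,
\begin{align*}
E(h)&=\pi\Vert\psi\Vert^2-2\pi\mu\!\int_0^1\!\psi^2 r\,dr+\pi\!\int_0^1\!\bigl(h_r^2\sin^2 h+2\mu\sin^4 h\bigr)r\,dr\\
&\ge \pi(\gamma_0-2\mu)\!\int_0^1\!\psi^2 r\,dr\ \ge\ 0,
\end{align*}
the first inequality dropping the nonnegative remainder and invoking the Rayleigh bound, the second using $\mu\le\gamma_0/2$. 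For uniqueness I would trace equality: $E(h)=0$ forces both $(\gamma_0-2\mu)\int_0^1\psi^2 r\,dr=0$ and $\int_0^1(h_r^2\sin^2 h+2\mu\sin^4 h)r\,dr=0$, whence $\sin h=0$ a.e.\ (from the first when $\mu<\gamma_0/2$, and from the second when $\mu=\gamma_0/2$, which is $>0$ since $\gamma_0>1$); being continuous on $[0,1]$ with $h(0)=0$ (Lemma \ref{lemma2}), $h$ must vanish identically. The case $\mu\le0$ is even easier, via $E(h)\ge\pi\int_0^1(h_r^2+\sin^2 h/r^2)r\,dr\ge0$. This yields $E\ge0$ on $V$ with $0=E(0)$ attained only at $h\equiv0$.

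For $\mu>\gamma_0/2$, I would test $E$ on small multiples of a first eigenfunction $\phi\in V$, normalised by $\int_0^1\phi^2 r\,dr=1$ (so $\Vert\phi\Vert^2=\gamma_0$; note $\phi$ is bounded by Lemma \ref{lemma2}). For $\varepsilon>0$ one has $\varepsilon\phi\in V$ and
\begin{equation*}
\frac{1}{\varepsilon^2}E(\varepsilon\phi)=\pi\int_0^1\Bigl[\phi_r^2+\Bigl(\frac{\sin\varepsilon\phi}{\varepsilon r}\Bigr)^2-\frac{\mu}{2}\Bigl(\frac{\sin 2\varepsilon\phi}{\varepsilon}\Bigr)^2\Bigr]r\,dr ;
\end{equation*}
the bracketed integrand converges pointwise, as $\varepsilon\to0^+$, to $\phi_r^2+\phi^2/r^2-2\mu\phi^2$ and, by $|\sin t|\le|t|$, is dominated uniformly for small $\varepsilon$ by $\phi_r^2+\phi^2/r^2+2|\mu|\phi^2\in L^1(0,1;r\,dr)$. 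Dominated convergence then gives $\varepsilon^{-2}E(\varepsilon\phi)\to\pi(\gamma_0-2\mu)<0$, so $E(\varepsilon\phi)<0$ for $\varepsilon$ small; since a minimiser of $E$ on $V$ exists by Lemma \ref{Lemma 4}, the global minimum is negative.

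The part I expect to be the main obstacle is pinning down the Rayleigh-quotient characterisation of $\gamma_0$ (equivalently, the compactness/completeness behind it), since this is precisely what translates the spectral threshold of \eqref{eigen} into the sign threshold for $E$; the two elementary $\sin$-estimates, the tracking of the nonnegative remainders, and the dominated-convergence step are routine once Lemmas \ref{4.9}, \ref{lemma2} and \ref{Lemma 4} are in hand.
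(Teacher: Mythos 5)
Your proposal is correct and follows essentially the same route as the paper: the Rayleigh--Poincar\'e inequality $\Vert v\Vert^2\ge\gamma_0\int_0^1 v^2\,r\,dr$ applied to $\sin h$ (your decomposition with $\psi=\sin h$ is exactly the paper's regrouping in case (i)), and testing with small multiples of the first eigenfunction for $\mu>\gamma_0/2$. The only differences are cosmetic: you compute the limit of $\varepsilon^{-2}E(\varepsilon\phi)$ by dominated convergence where the paper uses the identity $\sin x = x-\int_0^1(1-\cos(tx))x\,dt$, and you spell out the equality case and the sign conventions slightly more explicitly than the paper does.
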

\begin{proof} The first equation of \eqref{eigen}  can also be written after a multiplication by $r$ as 
\begin{equation*}
\displaystyle{ -(r\phi_r)_{r}+\frac{\phi}{r} = \gamma \phi r}.
\end{equation*}
Multiplying by $\phi$ and integrating over $(0,1)$ we derive by definition of $\gamma_0$ that 
\begin{equation}\label{pointca}
\int_0^1 \left(\phi_r^2 + \frac{\phi^2}{r^2} \right)rdr \geq \gamma_0 \int_0^1  \phi^2 rdr ~~\forall \phi ~\text{ with }~\phi(0) =0, \quad \phi_r(1)=0.
\end{equation}

We divide the proof in two parts:

\bigskip
\noindent ({\it i} ) \quad $\mu \le \gamma_0/2$ \\

In this case we have (using \eqref{pointca} with $ \phi = \sin h$)
\begin{equation*}
\begin{aligned}
E(h) &=\pi \int_0^1
\left[(\cos h)^2 h_r^2+\left(\frac{\sin h}{r}\right)^2 - 2\mu (\sin h )^2(\cos h)^2 + (1-(\cos h)^2) h_r^2 \right] rdr \\
&\geq \int_0^1
\left[\gamma_0 (\sin h )^2 - 2\mu (\sin h )^2(\cos h)^2 + (1-(\cos h)^2) h_r^2 \right] rdr \\
& = \int_0^1
\left[ (\gamma_0  - 2\mu ))(\sin h )^2 + (1-(\cos h)^2) (2\mu (\sin h )^2 + h_r^2) \right] rdr \\
&\geq 0 =E(0)
\end{aligned}
\end{equation*}
the equality taking place only for $h=0$.

\bigskip
\noindent({\it ii }) \quad $\mu > \gamma_0/2$ \\

Let us denote by $\phi_0$ the first positive normalised eigenfuntion to \eqref{eigen}.

One has for $\epsilon >0$
\begin{equation*}
\begin{aligned}
E(\epsilon \phi_0) &\leq \pi \int_0^1
\left[(\epsilon \phi_0)_r^2+\frac{(\epsilon \phi_0)^2}{r^2} - \frac{\mu}{2} (\sin (2\epsilon \phi_0))^2\right] rdr \\
&= \pi \int_0^1
\left[\gamma_0(\epsilon \phi_0)^2 - \frac{\mu}{2} (\sin (2\epsilon \phi_0))^2\right] rdr. \\
\end{aligned}
\end{equation*}
Using with $x= 2\epsilon \phi_0$ the formula 
\begin{equation*}
\sin x = x - \int_0^1 (1 - \cos (tx)) xdt 
\end{equation*}
$E(\epsilon \phi_0) $ can be written as %it comes 
\begin{equation*}
\begin{aligned}
E(\epsilon \phi_0) 
&= \pi \int_0^1
\left[(\epsilon \phi_0)^2 \{\gamma_0- 2\mu (1-\int_0^1 (1 - \cos (2t\epsilon \phi_0))dt )^2\}\right] rdr \\
&< 0 =E(0)
\end{aligned}
\end{equation*}
for $\epsilon$ small since 
\begin{equation*}
\int_0^1 (1 - \cos (2t\epsilon \phi_0))dt \to 0 
\end{equation*}
when $\epsilon \to 0$.
 \eproof  \end{proof}

\bigskip

\noindent
{\bf Alternative proof of $(ii)$
}

\medskip\noindent
Suppose $h\neq 0$ is a minimiser of $E$ one has
\begin{equation}
E(h)< E(0)
\end{equation}
i.e. 
\begin{equation}
\begin{array}{cl@{\hspace{0.5ex}}c@{\hspace{1.0ex}}l}  
\displaystyle \int_0^1
\left\{  h_r^2+\left(\frac{\sin h}{r}\right)^2 \right\} rdr < {\mu \over 2} \int_0^1
 \left({\sin 2 h}\right)^2  rdr = 2\mu \int_0^1 {\sin  h}^2 {\cos  h}^2 rdr  \\ 
\\
\displaystyle \Longrightarrow~~  \int_0^1
\left\{  {\cos  h}^2 h_r^2+\left(\lambda\frac{ \sin h}{r}\right)^2 \right\}rdr <\gamma_0 
 \int_0^1 {\sin  h}^2  rdr \\ \\
\displaystyle \Longrightarrow~~ \gamma_0 >
{ \int_0^1\left\{   {\sin  h}_r^2 +\left(\frac{\sin h}{r}\right)^2 \right\}  rdr \over
\int_0^1 {\sin  h}^2  rdr }
\end{array}
\end{equation}
and a contradiction since ${\sin  h}\in V$ with the definition of $\gamma_0$.

\medskip\noindent
Consider the problem
\begin{equation} \label{one}
\left \{ \begin {array}{l}\displaystyle{ -h_{rr}
-\frac{h_r}{r}
+\frac{\sin 2h}{r^2} = \mu \sin 2h \cos 2h } ~~~\text{on}~ (0,1)\\
\\
h(0) =h_r(1)=0
\end{array}\right.
\end{equation}

\noindent
\begin{lemma}
If $\mu\le \gamma_0/2$ the only solution of (\ref{one})
such that $ \displaystyle{h\in \left[ -{\pi\over 2}, {\pi\over 2}\right]}$ is $h\equiv0$.
\end{lemma}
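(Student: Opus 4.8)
The plan is to argue by multiplying the equation by a suitable test function and exploiting the constraint $|h|\le\pi/2$ together with the spectral inequality $\eqref{pointca}$, exactly as in the alternative proof of part $(ii)$ of Lemma \ref{biforcation}, but now run in the converse direction. Suppose $h$ is a solution of $\eqref{one}$ with $h\in[-\pi/2,\pi/2]$. Multiplying the first equation of $\eqref{one}$ by $r h$ and integrating over $(0,1)$, using the boundary conditions $h(0)=0$, $h_r(1)=0$ to discard the boundary terms after integration by parts, I would obtain the identity
\begin{equation*}
\int_0^1\left(h_r^2+\frac{\sin 2h}{2r^2}\,\frac{2h}{1}\cdot\frac12\right)\!r\,dr=\text{(coupling term)};
\end{equation*}
more precisely the cleaner route is to instead test $\eqref{one}$ with $\sin h\cos h$ (equivalently to reproduce the computation in Lemma \ref{biforcation}$(ii)$), which is legitimate since $\sin h\in V$ by Lemma \ref{4.9} and $\cos h$ is bounded. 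This yields
\begin{equation*}
\int_0^1\left(\cos^2 h\,h_r^2+\frac{\sin^2 h}{r^2}\right)r\,dr=2\mu\int_0^1\sin^2 h\cos^2 h\,r\,dr,
\end{equation*}
after using $\tfrac{d}{dr}\sin h=h_r\cos h$ and the trigonometric identities.

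Next I would bound the right-hand side. Since $\cos^2 h\le 1$, the right-hand side is at most $2\mu\int_0^1\sin^2 h\,r\,dr$, and by the hypothesis $\mu\le\gamma_0/2$ this is at most $\gamma_0\int_0^1\sin^2 h\,r\,dr$. On the other hand, the left-hand side dominates $\int_0^1\big(\sin^2 h\big)_r{}^2$-type terms: precisely, using $\cos^2 h\,h_r^2=(\sin h)_r^2$, the left-hand side equals $\|\sin h\|^2$ in the norm of $V$. Applying the variational characterisation $\eqref{pointca}$ of $\gamma_0$ to $\phi=\sin h\in V$ (which satisfies $\phi(0)=0$ automatically, and for which $\phi_r(1)=h_r(1)\cos h(1)=0$ is not even needed since $\eqref{pointca}$ holds for all $\phi\in V$), I get
\begin{equation*}
\gamma_0\int_0^1\sin^2 h\,r\,dr\le \int_0^1\left((\sin h)_r^2+\frac{\sin^2 h}{r^2}\right)r\,dr\le \gamma_0\int_0^1\sin^2 h\,r\,dr.
\end{equation*}
Hence equality holds throughout. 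Chasing back the two inequalities I used: equality in the step $\cos^2 h\le 1$ forces $\sin h\cos h\,h_r\equiv0$ and forces $\sin^2 h(1-\cos^2 h)=\sin^4 h$ to integrate to something compatible, which (combined with the strict inequality $\mu<\gamma_0/2$, or with the equality case $\mu=\gamma_0/2$ analysed separately) forces $\sin h\equiv0$ on $(0,1)$, i.e. $h\equiv 0$ since $|h|\le\pi/2$ and $h$ is continuous with $h(0)=0$.

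The main obstacle I anticipate is handling the borderline case $\mu=\gamma_0/2$ cleanly: when $\mu<\gamma_0/2$ the chain of inequalities is strict in the Poincaré-type step unless $\sin h\equiv0$, so one concludes immediately; but when $\mu=\gamma_0/2$ one must extract from the saturation of $\eqref{pointca}$ that $\sin h$ is (a multiple of) the first eigenfunction $\phi_0$, and then feed this back into the equation $\eqref{one}$ to derive a contradiction — for instance, $\sin h=c\phi_0$ together with $\cos^2 h\,h_r^2=(\sin h)_r^2$ and the saturation forcing $\cos^2 h\equiv1$ on the support, which is incompatible with $\sin h$ being a nonzero eigenfunction unless $c=0$. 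An alternative, perhaps smoother, way around this is to invoke Lemma \ref{biforcation}: for $\mu\le\gamma_0/2$ we have $E(h)\ge 0=E(0)$ with equality only at $h\equiv0$, and since a solution of $\eqref{one}$ with $|h|\le\pi/2$ that minimises (which it does, being the unique critical point in that range by the argument above, or by Lemma \ref{4.8}) must have $E(h)=E(0)$; combined with the uniqueness of the minimiser's value this gives $h\equiv0$ directly. I would present the spectral-inequality argument as the main line and remark on the $\mu=\gamma_0/2$ case using the equality statement already proved in Lemma \ref{biforcation}$(i)$.
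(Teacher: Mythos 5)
Your overall strategy --- test the ODE with (a multiple of) $\sin 2h$ and play the resulting identity against the variational characterisation of $\gamma_0$ --- is exactly the paper's route (the paper multiplies \eqref{one} by $\sin 2h$, rewrites $h_r=(\sin 2h)_r/(2\cos 2h)$, and contradicts \eqref{due} using $\sin 2h\in V$). But your execution contains a genuine computational error on which everything afterwards rests. Testing \eqref{one} with $v=\sin h\cos h$ (so $v_r=\cos 2h\,h_r$) and integrating by parts with $h(0)=h_r(1)=0$ gives
\begin{equation*}
\int_0^1\Bigl(\cos 2h\,h_r^2+\frac{\sin^2 h\cos^2 h}{r^2}\Bigr)r\,dr
=2\mu\int_0^1\sin^2 h\,\cos^2 h\,\cos 2h\;r\,dr,
\end{equation*}
not the identity you state: the gradient term carries $\cos 2h$ (not $\cos^2 h$), the potential term is $\sin^2h\cos^2h/r^2$ (not $\sin^2h/r^2$), and the right-hand side retains a factor $\cos 2h$. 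Consequently the left-hand side is not $\Vert\sin h\Vert^2_V$, the right-hand side is not $2\mu\int\sin^2h\,r\,dr$, and your chain $\gamma_0\int\sin^2h\le\Vert\sin h\Vert_V^2\le\gamma_0\int\sin^2h$ does not follow. With the correct identity the natural comparison function is $\sin 2h$ (as in the paper), the hypothesis $2\mu\le\gamma_0$ pairing with $\cos 2h\le 1$ on the right; the delicate point, which your (erroneous) identity makes invisible, is the sign of $\cos 2h$ on $|h|\in(\pi/4,\pi/2]$, where bounds such as $(\sin h\cos h)_r^2=\cos^2 2h\,h_r^2\le\cos 2h\,h_r^2$ fail and some care is needed.

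Your fallback via Lemma \ref{biforcation} is also not sound as stated: that lemma concerns the global minimiser of $E$, whereas the present statement is about arbitrary solutions of the Euler equation \eqref{one} with $|h|\le\pi/2$. Such a solution is only a critical point, and asserting that it ``must be a minimiser, being the unique critical point in that range by the argument above'' is circular --- uniqueness of the critical point is precisely what is to be proved; Lemma \ref{4.8} goes in the opposite direction (minimisers satisfy $0<h\le\pi/2$) and does not convert critical points into minimisers. The repair is to correct the integral identity and then contradict \eqref{due} (equivalently \eqref{pointca}) with $\phi=\sin 2h$, which is legitimate by Lemma \ref{4.9}, exactly as the paper does.
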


\noindent
\begin{proof}
Recall that for any $\phi \in V$ one has by definition of  $\gamma_0$
\begin{equation}\label{due}
\displaystyle{\gamma_0 \int_0^1 \phi^2 r dr \le 
 \int_0^1 \left(\phi_r^2 + {\phi^2\over{r^2}}\right) r dr } ~~.
\end{equation}
Let us write the %first 
equation  (\ref{one}) as 
\begin{equation}
({r h_r})_{r} +\frac{\sin 2h}{2 r} = \mu \sin 2h \cos 2h \, r~.
\end{equation}
Multiply both sides by $\sin 2h$ and integrate on $(0,1)$. It comes
\begin{equation}
\displaystyle{\int_0^1 r \left\{ h_r  ({\sin 2h})_{r} +\frac{(\sin 2h)^2}{2 r^2} \right\} dr= 
\mu \int_0^1 (\sin 2h)^2 \cos 2h ~ r dr} ~.
\end{equation}
One has 
\begin{equation}
\displaystyle{({\sin 2h})_{r} =2 \cos 2h ~ h_r \Longleftrightarrow
h_r= {({\sin 2h})_{r} \over {2 \cos 2h}}}~.
\end{equation}
Thus, the equation above becomes 
\begin{equation}
\displaystyle{\int_0^1 r \left\{   ({\sin 2h})^2_{r} {1\over {\cos 2h}} + \frac{(\sin 2h)^2}{ r^2} \right\} dr= 
2 \mu \int_0^1 (\sin 2h)^2 \cos 2h r dr} ~.
\end{equation}
Suppose that $h$ is such that  $ \displaystyle{h\in \left[ -{\pi\over 2}, {\pi\over 2}\right]}$
 then since $-1\le \cos 2h\le 1$ one gets 
 \begin{equation}
\displaystyle{\int_0^1 r \left\{   ({\sin 2h})^2_{r}  + \frac{(\sin 2h)^2}{ r^2} \right\} dr<
\gamma_0 \int_0^1 (\sin 2h)^2  r dr} ~
\end{equation}
i.e $\sin 2h \in V$ and satisfies an inequality contradicting (\ref{due}),
except if $h\equiv 0$. %\vskip 2cm
\end{proof} \hfill\eproof
%\lblu

\bigskip
\noindent Each minimiser of $E(h)$  solves the problem (\ref{bch}),
(\ref{eqh}). For the solutions of this problem  we can give the
following existence result around the bifurcation point.

\begin{lemma} \label{existence}
There exist two positive numbers $\rho_0$ and $\delta_0$ such that,
the problem (\ref{bch}), (\ref{eqh}) does not have non-zero
solutions for $\mu \in (\gamma_0/2 - \delta_0, \gamma_0/2]$ and
$\|h\|_0 \le \rho_0$. The problem has exactly two solutions $h_1$
and $h_2 = - h_1$ in the sphere $\|h\|_0 \le \rho_0$ for $\mu \in
(\gamma_0/2, \gamma_0/2 + \delta_0)$.
\end{lemma}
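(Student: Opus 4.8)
The plan is a local bifurcation analysis at $\mu_0:=\gamma_0/2$ by a Lyapunov--Schmidt reduction, exploiting the odd symmetry of the problem. I first recast \eqref{eqh}--\eqref{bch} as an operator equation
\[
F(\mu,h):=Lh-2\mu h-R(\mu,h)=0,\qquad R(\mu,h):=\mu\bigl(\sin2h\cos2h-2h\bigr)+\frac{h-\sin h\cos h}{r^{2}},
\]
in suitable (weighted $C^{2}$-type) Banach spaces, where $L\phi:=-\phi_{rr}-\phi_r/r+\phi/r^{2}$ with $\phi(0)=\phi_r(1)=0$ is the self-adjoint operator of \eqref{eigen}: it has compact resolvent, and its first eigenvalue $\gamma_0$ is simple (first eigenvalue of a singular Sturm--Liouville problem) with normalised eigenfunction $\phi_0$. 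The remainder $R$ collects the genuinely nonlinear part, with leading term $\tfrac{2}{3}h^{3}/r^{2}-\tfrac{16\mu}{3}h^{3}$. A technical point is to choose the spaces (functions behaving like $cr$ near $0$) so that $h\mapsto R(\mu,h)$ is $C^{1}$ with $R(\mu,0)=D_hR(\mu,0)=D_h^{2}R(\mu,0)=0$, the $1/r^{2}$ factor being harmless there. Note $F(\mu,-h)=-F(\mu,h)$, so $h\equiv0$ always solves it and solutions occur in pairs $\pm h$.

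Next I analyse the linearisation $D_hF(\mu_0,0)=L-\gamma_0 I$: it is Fredholm of index $0$, with one-dimensional kernel $\langle\phi_0\rangle$ (simplicity of $\gamma_0$) and, by self-adjointness, one-dimensional cokernel $\langle\phi_0\rangle$; the transversality condition holds because $\partial_\mu D_hF(\mu_0,0)\phi_0=-2\phi_0$ does not lie in the range $\{\phi_0\}^{\perp}$ of $L-\gamma_0 I$. Writing $h=s\phi_0+w$ with $w\perp\phi_0$ in $L^{2}(0,1;r\,dr)$, the implicit function theorem solves the range component of $F=0$ for a smooth $w=w(\mu,s)$ with $w(\mu,0)=0$, $w(\mu,-s)=-w(\mu,s)$ and $\|w(\mu,s)\|=O(s^{3})$ uniformly near $\mu_0$. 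Substituting this into the kernel component yields the bifurcation equation
\[
g(\mu,s)=-2(\mu-\mu_0)\,s-C(\mu)\,s^{3}+O(s^{5})=0,\qquad C(\mu)=\int_0^1\Bigl(\tfrac{2}{3}\tfrac{\phi_0^{4}}{r^{2}}-\tfrac{16\mu}{3}\phi_0^{4}\Bigr)r\,dr,
\]
which by the symmetry is odd in $s$, i.e. $g(\mu,s)=s\,\tilde g(\mu,s^{2})$; small nonzero solutions of \eqref{eqh}--\eqref{bch} then correspond bijectively to zeros of $\tilde g$ with $s\neq0$.

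The main obstacle is to determine the direction of bifurcation, i.e. to show $C(\mu_0)<0$ (equivalently $\int_0^1\phi_0^{4}/r^{2}\,r\,dr<4\gamma_0\int_0^1\phi_0^{4}\,r\,dr$, since $\tfrac{16}{3}\mu_0=\tfrac{2}{3}\cdot4\gamma_0$). I obtain this by comparing, at $\mu=\mu_0$, two evaluations of the energy $E$ at $s\phi_0$. On the one hand, the Taylor expansion (using $\sin^{2}x=x^{2}-\tfrac13x^{4}+O(x^{6})$, all integrals converging since $\phi_0\in V\cap C([0,1])$) gives $E(s\phi_0)=-\tfrac{\pi}{2}C(\mu_0)\,s^{4}+O(s^{6})$. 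On the other hand, the refined lower bound established in the proof of Lemma~\ref{biforcation}(i), specialised to $\mu=\mu_0$, reads $E(h)\ge\pi\gamma_0\int_0^1\sin^{4}h\,r\,dr$, whence $E(s\phi_0)\ge\pi\gamma_0\bigl(\int_0^1\phi_0^{4}\,r\,dr\bigr)s^{4}+O(s^{6})>0$ for $0<|s|$ small. Matching the $s^{4}$-coefficients forces $C(\mu_0)\le-2\gamma_0\int_0^1\phi_0^{4}\,r\,dr<0$.

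Finally I conclude. For $\mu\le\gamma_0/2$: choosing $\rho_0$ small enough that $\|h\|_0\le\rho_0$ forces $\|h\|_{C([0,1])}\le\pi/2$ (embedding $V\hookrightarrow C([0,1])$ with $h(0)=0$, Lemma~\ref{lemma2}), the Lemma immediately preceding shows $h\equiv0$ is the only solution of \eqref{one} with $\|h\|_0\le\rho_0$; a fortiori there are none for $\mu\in(\gamma_0/2-\delta_0,\gamma_0/2]$. For $\mu>\gamma_0/2$: since $\tilde g(\mu_0,0)=0$ and $\partial_\mu\tilde g(\mu_0,0)=-2\neq0$, the implicit function theorem gives a $C^{1}$ curve $\mu=M(t)$, $t\in[0,t_0)$, exhausting the zeros of $\tilde g$ near $(\mu_0,0)$, with $M(0)=\mu_0$ and $M'(0)=-C(\mu_0)/2>0$; being strictly increasing, $M$ is invertible near $0$, so for $\mu\in(\gamma_0/2,\gamma_0/2+\delta_0)$ the unique positive root is $t=M^{-1}(\mu)$, and $s=\pm\sqrt{M^{-1}(\mu)}$ yields exactly the two nonzero solutions $h_1=s\phi_0+w(\mu,s)$ and $h_2=-h_1$ in $\{\|h\|_0\le\rho_0\}$, after shrinking $\delta_0,\rho_0$ so as to remain within the region of validity of the reduction. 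This will establish the lemma.
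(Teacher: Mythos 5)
Your proposal is correct in substance but follows a genuinely different route from the paper. The paper does not perform the Lyapunov--Schmidt reduction itself: it rewrites \eqref{eqh} as $2\mu h=L(h,r)+C(h,r,\mu)+D(h,r,\mu)$ with cubic part $C$ and remainder $D$ satisfying $\|D\|_0=o(\|h\|^3)$, and then invokes \cite[Theorem 6.12]{K} at the simple eigenvalue $\gamma_0$ of \eqref{eigen}; the only real analytical work is the sign condition \eqref{CF}, which it obtains by testing the eigenvalue equation against $(\phi^0)^3$ and integrating by parts, yielding $\int_0^1(\phi^0)^4/r\,dr\le\gamma_0\int_0^1(\phi^0)^4\,r\,dr$. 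You instead carry out the reduction by hand (kernel/range splitting, odd bifurcation function $g(\mu,s)=s\,\tilde g(\mu,s^2)$, implicit function theorem applied to $\tilde g$), and you get the crucial sign $C(\mu_0)<0$ by a different device: matching the $s^4$ Taylor coefficient of $E(s\phi_0)$ against the lower bound $E\ge\pi\gamma_0\int_0^1\sin^4 h\,r\,dr$ extracted from the proof of Lemma~\ref{biforcation}(i) at $\mu=\gamma_0/2$. Your conclusion $C(\mu_0)\le-2\gamma_0\int_0^1\phi_0^4\,r\,dr$ is exactly equivalent to the paper's inequality, so the two arguments meet; yours buys self-containedness (no appeal to Krasnoselskii's theorem, and an energy-based explanation of why the bifurcation is supercritical), the paper's buys brevity.

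One step should be tightened. In the paper the subscript $0$ goes with the pairing $((f,g))_0=\int_0^1 fg\,r\,dr$, i.e. an $L^2(r\,dr)$-type norm, whereas your treatment of the case $\mu\le\gamma_0/2$ argues that $\|h\|_0\le\rho_0$ forces $\sup|h|\le\pi/2$ ``by the embedding $V\hookrightarrow C([0,1])$''. That embedding (Lemma~\ref{lemma2}) controls the sup norm by the $V$-norm, not by an $L^2(r\,dr)$ norm; likewise your count of \emph{exactly two} solutions is exhaustive only among solutions that are small in the stronger space in which the reduction is performed. So you should either declare $\|\cdot\|_0$ to be the $V$-norm (then $\sup|h|\le\|h\|_V$ follows from the computation in Lemma~\ref{lemma2} and your use of the uniqueness lemma for \eqref{one} is immediate), or insert a bootstrap showing that solutions of \eqref{one} with small $\|\cdot\|_0$ are uniformly small. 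The paper is equally silent on this point (it is hidden in the hypotheses of the cited theorem), so this is a matter of tightening rather than a flaw specific to your argument.
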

\begin{proof}
The proof follows from \cite[Theorem 6.12]{K}. Indeed the equation
(\ref{eqh})  can be written in the form
\begin{equation}\label{eqK}
 2\mu h =  L(h, r) + C(h, r, \mu) + D(h, r, \mu)
\end{equation}
 where $L$ is the linear operator
 $$L(h,r)=-h_{rr} -\frac{h_r}{r} +\frac{h}{r^2}$$
 and $C$, $D$  are given by
 $$C(h, r, \mu)=-\frac{2}{3}\frac{h^3}{r^2}+\frac{16}{3} \mu {h^3},$$
$$D(h, r, \mu)=-(\frac{2h}{2r^2} -\frac{\sin2h}{2r^2}) + \frac{2}{3}\frac{h^3}{r^2}
+\frac{\mu }{2}( 4h - \sin 4h) - \frac{16}{3} \mu {h^3}.
$$

\noindent It is easy to check that
\begin{equation} \label{omo}
C(th,r,\mu) = t^3 C(h,r,\mu),\quad (-\infty < t < \infty)
\end{equation}
and
\begin{equation}\label{D}
\|D(h,r,\mu)\|_0  = o (\| h\|^3).\end{equation} Moreover we
have
\begin{equation}\label{CF}
\displaystyle{((C(\phi^0,r,\mu),\phi^0))_0 =
\int_0^1 \left[-\frac{2}{3}\frac{(\phi^0)^4}{r}+\frac{16}{3} \mu
{(\phi^0)^4} r\right] dr}
> 0,\quad {\rm for}\,\, \mu\ge \frac{\gamma_0}{8}.
\end{equation}
Indeed from $((L(\phi^0,r)-\gamma_0 \phi^0, (\phi^0)^3))_0 = 0$ it
follows that
\begin{equation}
\displaystyle{\int_0^1 \left[{-\frac{d}{dr}{{(\phi^0_r r)}}(\phi^0)^3}+ {{(\phi^0)^4}\over{r}}
-\gamma {(\phi^0)^4} r \right] dr =0}
\end{equation}
The latter, on by parts integration
\begin{equation*}
\displaystyle{-{(\phi^0_r r)}(\phi^0)^3 \big\vert_{0}^{1}
+ \int_0^1 3 {(\phi^0_r)^2}  {(\phi^0)^2 } r dr  + 
 \int_0^1\left[{{(\phi^0)^4}\over{r}} -\gamma_0 {(\phi^0)^4} r \right] dr =0}
\end{equation*}
that is
$$ \int_0^1 \frac{(\phi^0)^4}{r} - \gamma_0 (\phi^0)^4r \, dr \le 0,$$
and the inequality (\ref{CF}) can be easily derived.

\noindent The statements (\ref{eqK})- (\ref{CF}), together to the
local Lipschitz condition on the operators C and D, assure (see
\cite{K} ) the existence of exactly two branch of non-zero solutions
bifurcating from the point $\gamma_0/2$. Finally we remark that the
existence of two opposite branch follows from the odd functions in
(\ref{eqK}).

 \eproof  \end{proof}

\begin{remark}\label{stability}
In order to establish the stability of the solutions to (\ref{bch}),
(\ref{eqh}) around the point $\mu_0=\gamma_0/2$, we perform  a
qualitative analysis of   the bifurcation equation to the lowest
order (see equation (\ref{loweq}) below). From (\ref{eqK}) setting
$$G(h,r,\mu)= -2\mu h + L(h, r) + C(h, r, \mu) + D(h, r, \mu)= 0 $$
and
$$ 2 \mu = \gamma_0 + \delta, \qquad |\delta| << 1,$$
assuming that each element $h \in \mathcal{H}^1(0,1)$ has the unique
representation
$$ h= \beta \phi^0 + P h,\qquad (P h, \phi^0)_0 = 0,\quad \beta \in
\mathbb{R},$$ we have
$$(G(h, r,\mu), \phi^0)_0= - \delta \beta  +
(C(\beta \phi^0, r,\mu),\phi^0)_0 + \dots$$ Moreover from
(\ref{omo}), (\ref{CF}) we can get to the simple l.o. bifurcation
equation, namely
\begin{equation} \label{loweq} - \delta \beta +
\beta^3 \bar{C}=0,\qquad \bar{C} =(C(\phi^0,r,\mu),\phi^0)_0 \ge 0.
\end{equation}
It is easy to check that:\\
for $\delta \le 0$ there is the only solution $\beta=0$ and this
solution is stable (indeed in this case: $-\delta + 3 \beta^2
\bar{C} \ge 0)$;\\
for $\delta > 0$ the trivial solution is no more stable but other
two stable solutions appear, i.e. $\beta = \pm
\sqrt{\delta/\bar{C}}$.
\end{remark}
\subsection*{Acknowledgements}
Under the financial support of G.N.F.M.-I.N.d.A.M.,  I.N.F.N. and Universit\`a di Roma 
 \textsc{La Sapienza}, Rome, Italy.
M. Chipot acknowledges  Dip. S.B.A.I., Universit\`a di Roma  \textsc{La Sapienza}, for the kind hospitality.

\bibliographystyle{plain}

\nocite{*}

\hfill %\today 

\bibliography{MGVS2020bib}

\end{document}